\newcommand{\ep}{\varepsilon}
\newcommand{\R}{\mathbb{R}}
\newcommand{\N}{\mathbb{N}}
\newcommand{\eps}{\varepsilon}
\newcommand{\T}{\mathbb{T}}
\newtheorem{theorem}{Theorem}[section]
\newtheorem{lemma}[theorem]{Lemma}
\newtheorem{proposition}[theorem]{Proposition}
\newtheorem{remark}[theorem]{Remark}
\begin{document}

\title{Analysis of a time-dependent problem of mixed migration and population dynamics}

\author{
Francois Castella\footnote{Irmar, Universit\'e de Rennes 1, Campus de Beaulieu, 35042 Rennes,
francois.castella\@@univ-rennes1.fr},
Philippe Chartier\footnote{INRIA, Universit\'e de Rennes 1, Campus de Beaulieu, 35042 Rennes,
philippe.chartier\@@inria.fr}, Julie Sauzeau\footnote{Irmar, Universit\'e de Rennes 1, Campus de Beaulieu, 35042 Rennes,
julie.sauzeau\@@univ-rennes1.fr}}

\maketitle

\begin{abstract}
In this work, we consider a system of differential equations modeling the dynamics of some populations of preys and predators, moving in space according to rapidly oscillating time-dependent transport terms, and interacting with each other through a Lotka-Volterra term. These two contributions naturally induce two separated time-scales in the problem. A generalized center manifold theorem is derived to handle the situation where the linear terms are depending on the fast time in a periodic way. The resulting equations are then amenable to averaging methods. 
As a product of these combined techniques, one obtains an autonomous differential system in reduced dimension whose dynamics can be analyzed in a much simpler way as compared to original equations. Strikingly enough, this system is of Lotka-Volterra form with modified coefficients. Besides, a higher order perturbation analysis allows to show that the oscillations on the original model destabilize the cycles of the averaged Volterra system in a way that can be explicitely computed.\\ \\
\end{abstract}

\noindent
Keywords: Center manifold, Shadowing, Averaging, Lotka-Volterra, Perron-Frobenius, population dynamics.

Mathematics Subject Classification (2010):
35B25;  92D40

\section{Introduction}
\label{intro}
This paper is concerned with the analytical study of the dynamics of a prey-predator model and is a follow-up of \cite{Castella}.
 The model under consideration here takes into account both interactions between species {\em and} spatial migrations.
As such it is a strict  elaboration of the well-known Lotka-Volterra equations.  In particular, a fundamental feature of the operating dynamics that we wish to mention right away is the occurrence of two time-scales, accounting for the fact that spatial evolutions are vastly faster than demographical ones. 

In this still simplified version, the space is discretized into $N$ distinct sites amongst which  species move rapidly (i.e. change from one site to another within, say, a few hours). These migrations are described by two linear operators corresponding to preys, on the one hand, and to predators, on the other hand, which both depend periodically on time in a highly-oscillatory way.  One may think of preys and predators migrating on the time scale of an hour, say, with migrations rates which vary on the same time scale, due to dayly variations of the environment. This is typically the case for plancton, whose motion in the vectorial direction depends on the light brought by the sun during the day. In addition, these operators are assumed to preserve the number of individuals, so that migration and demographic terms remain independent in the equations (individuals can not die while migrating). 

As for predator-prey interactions, they are described by a term of Lotka-Volterra type which may differ from one site to another, that is to say, spatial characteristics may vary (for instance owing to more abundant food or more spots to hide). These interactions typically become apparent over intervals of time that can be gauged in months. For this reason, we shall introduce the small parameter $\eps$ defined as the ratio between the two present time-scales (migrations over predator-prey interactions).  

The complete model shall be  presented with full details in Section 2. However, in this introductory section, it is enlightening to describe it in an abstract and concise form as follows
\begin{eqnarray}
{\frac{d}{d t}(\mathrm{populations})(t)}&=\frac{1}{\eps}(\mathrm{migration ~ term})\left(\frac{t}{ \eps}\right) 
\cdot (\mathrm{populations})(t) \nonumber\\
{}&{\quad\quad\quad+(\mathrm{prey-predator ~ interactions})}\nonumber
\label{int}
\end{eqnarray}
where the migration term is periodic in $t/\eps$. The aim of this work is to conduct an analysis of the dynamics in the limit $\eps \rightarrow 0$ and to draw conclusions from the resulting asymptotic model: in order to do so, the original equations are reduced through several changes of variables to a form which is amenable to center manifold techniques. The center manifold which is then constructed in Section \ref{sect:manifold}
has the peculiarity to depend periodically on the fast-time variable. In the same section, we will then prove that it can be approximated up to every order in the small parameter $\eps$ by the appropriately truncated solution  of a partial differential equation: it is noteworthy to mention that this solution can be computed explicitly through a recursive relation. 

As a result of the center manifold theorem of Section \ref{sect:manifold}, one obtains a highly-oscillatory time-dependent differential system (the fast oscillations originating from $\theta=t/\eps$ in $h(\cdot,\theta)$). In order to grasp the essential dynamics, we thus derive the corresponding averaged equations in  which  variable $\theta=t/\eps$ has been integrated. In Section \ref{sect:averaging} the procedure is thus described up to arbitrary order errors in $\eps$ at the additional burden of a periodic change of variables. Explicit equations up to order $1$ in $\eps$ are presented. 

In the last section, the full methodology is worked through for an example implying two sites. Interestingly enough, the limit equations are still of Lotka-Volterra type, and the coefficients are an average in space and time of the original Lotka-Volterra coefficients and on those of the transfer operators.


\section{Description of the model}
\label{sec:1}
In this first paper, we content ourselves with a finite dimensional description of our problem. Accordingly, we  consider a  discretization of the spatial domain  into $N\in \N^*$ subdomains,  pick up a small dimensionless parameter $\varepsilon$ expressing the ratio between the time-scales of migrations and demographic evolution and denote by $p_i^\ep(t),~i\in[\![ 1,N]\!]$ the number of preys occupying the $i^{\mbox{th}}$ site  at time $t$ and by 
$q_i^\ep(t),~i\in[\![ 1,N]\!]$ the corresponding number of predators. Introducing the  vectors
\begin{eqnarray}
{p}^\ep(t)=\left( p^\ep_1(t) , \cdots,   p^\varepsilon_N(t)\right)^T \quad \mbox{ and } \quad
{q}^\ep(t)=\left(  q^\ep_1(t),  \cdots  ,  q^\varepsilon_N(t) \right)^T,\nonumber
\end{eqnarray}
the initial-value problem can be written as
\begin{equation}\left\{\begin{array}{lll}
\frac{\mathrm{d}p^\varepsilon(t)}{\mathrm{dt}} &=\frac{1}{\ep} K_p\left(\frac{t}{\ep}\right) {p}^\varepsilon(t)+f\left({p}^\varepsilon(t),{q}^\varepsilon(t)\right), & p^\ep(0)=p_0
\vspace{2mm}\\
\frac{\mathrm{d}q^\varepsilon(t)}{\mathrm{dt}} &=\frac{1}{\ep} K_q\left(\frac{t}{\ep}\right) {q}^\varepsilon(t)+g\left({p}^\varepsilon(t),{q}^\varepsilon(t)\right), & q^\ep(0)=q_0
\label{1}
\end{array}\right.\end{equation}
where $K_p$ and $K_q$ are time-dependent transport matrices defined by
\begin{eqnarray} (K_p(t))_{i,j}=\sigma^p_{i,j}(t) \mbox{ for } i\neq j, \quad 
(K_p(t))_{i,i}=-\sum\limits_{k=1}^N \sigma^p_{k,i}(t) \nonumber
\end{eqnarray} 
and by similar equations for $K_q$. 
Here, the rates of transfer $\sigma^p_{i,j}(t)$ and $\sigma^q_{i,j}(t)$ are the proportions of preys, respectively predators,  moving from site $j$ to site $i$ at time $t$. These functions are assumed to be positive and periodic with period $T=2\pi$. Note that, as a direct consequence of these definitions, $\mathbf{1}=(1,\ldots,1)^T \in \R^N$ is a left-eigenvector of both $K_p$ and $K_q$, i.e.  $\mathbf{1}^T K_p(t)=\mathbf{1}^T K_q(t)=0$.

As for the functions $f$ and $g$, they model non-linear interactions of Lotka-Volterra type between species: for all integers $1 \leq i \leq N$
\begin{eqnarray}
f_i\left({p}^\ep(t),{q}^\ep(t)\right) &= a_{p,i} \, p_i^\varepsilon(t)-b_{p,i} \,p_i^\varepsilon(t) \,q_i^\varepsilon(t)   \nonumber\\
g_i\left(p^\ep(t),{q}^\ep(t)\right)  &= -a_{q,i} \,q_i^\varepsilon(t)+b_{q,i} \,p_i^\varepsilon(t) \, q_i^\varepsilon(t).\nonumber
\end{eqnarray}
Coefficient $a_{p,i}$, assumed to be independent of time, is the birth-rate of preys on site $i$, while $a_{q,i}$ is the death-rate of predators on the site $i$ . In the same way, $b_{p,i}$ is the death-rate of preys on site $i$ caused by the predators, while $b_{q,i}$ is the birth-rate of predators due to the presence of preys. All those quantities are non-negative.

\begin{remark} 
For the sake of simplicity, interactions between species are modeled here  with constant coefficients for the species interactions. However, the following theorems would remain true with time-dependent coefficients,
 $a_{p,i}=a_{p,i}(t)$, and so on. 
\end{remark}

\section{Analysis and reduction of the system}
\label{sec:2}
\subsection{Main properties of the linear part of the system}
\label{sec:2.1}
In this subsection, we present the spectral properties of our transport operators $K_p(t)$ and $K_q(t)$. They determine the modifications which are necessary to bring our system into a form amenable to center manifold techniques.

\begin{lemma} \label{SpecK}
For all $\theta\in\T$, $0 \in \mbox{Sp}(K_p(\theta))$ and is simple, while other eigenvalues have a strictly negative real part. The right-eigenvector $p_{eq}(\theta)$ associated to $0$ can be chosen as a smooth function w.r.t. $\theta$  satisfying $p_{eq}\cdot\mathbf{1}=1$. Moreover the following property holds true:
\begin{eqnarray}
\exists \beta >0, \quad \forall \theta \in \T, \quad \forall \lambda \in \mbox{Sp}(K_p(\theta))/\{0\}, \quad \Re{(\lambda)}\leq-2\beta.\nonumber
\end{eqnarray}
Besides,  denoting $\mathcal{E}_0=\{z\in\R^N,z\cdot\mathbf{1}=0\}$, one has 
$$
\forall \theta \in \T,\quad \mathcal{E}_0 \oplus \mathrm{Span}(p_{eq}(\theta)) = \R^N.
$$
The same properties hold for matrix $K_q$, with the right-eigenvector $q_{eq}$ associated to  eigenvalue $0$ and the supplementary space is again $\mathcal{E}_0$.
\end{lemma}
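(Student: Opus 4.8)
The plan is to recognize $K_p(\theta)$ as (minus) a compartmental/Markov generator and bring the statement under the scope of Perron--Frobenius theory. Fix $\theta\in\T$. Since the rates $\sigma^p_{i,j}(\theta)$ are strictly positive and $(K_p(\theta))_{i,i}=-\sum_{k\neq i}\sigma^p_{k,i}(\theta)$, the matrix $K_p(\theta)$ is a Metzler matrix with strictly positive off-diagonal entries and vanishing column sums. Choosing a constant $c=c(\theta)$ larger than $\max_i\sum_{k\neq i}\sigma^p_{k,i}(\theta)$, the shifted matrix $B(\theta):=K_p(\theta)+c\,\mathrm{Id}$ has all its entries strictly positive.

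First I would apply the Perron--Frobenius theorem to $B(\theta)$: there is a simple real eigenvalue $\rho=\rho(B(\theta))>0$, equal to the spectral radius, admitting a strictly positive right eigenvector, while every other eigenvalue $\mu$ satisfies $|\mu|<\rho$. The key point is to identify $\rho$. From $\mathbf{1}^T K_p(\theta)=0$ we get $\mathbf{1}^T B(\theta)=c\,\mathbf{1}^T$, so $\mathbf{1}>0$ is a positive \emph{left} eigenvector of $B(\theta)$ for the eigenvalue $c$; since a positive eigenvector can only be attached to the Perron root, this forces $\rho=c$. Translating back through $K_p(\theta)=B(\theta)-c\,\mathrm{Id}$, the eigenvalue $0$ of $K_p(\theta)$ is simple, carries the same strictly positive right eigenvector, and every remaining eigenvalue $\lambda=\mu-c$ obeys $|\lambda+c|<c$, hence $\Re(\lambda)<0$. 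Normalizing the positive eigenvector by $p_{eq}(\theta)\cdot\mathbf{1}=1$ (legitimate since it is strictly positive) then fixes $p_{eq}(\theta)$ uniquely; moreover $\mathbf{1}^T K_p(\theta)=0$ shows that $\mathcal{E}_0=\{z:z\cdot\mathbf{1}=0\}$ is invariant under $K_p(\theta)$, and since $p_{eq}(\theta)\cdot\mathbf{1}=1\neq0$ we have $p_{eq}(\theta)\notin\mathcal{E}_0$, so a dimension count ($\dim\mathcal{E}_0=N-1$) yields $\mathcal{E}_0\oplus\mathrm{Span}(p_{eq}(\theta))=\R^N$.

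It remains to address the two points that involve the $\theta$-dependence, which I expect to be the crux. For smoothness, I would note that, $0$ being a simple and isolated eigenvalue while $\theta\mapsto K_p(\theta)$ is smooth, the associated spectral projector $P(\theta)=\frac{1}{2\pi i}\oint (z-K_p(\theta))^{-1}\,dz$, integrated over a fixed small circle around $0$ enclosing no other eigenvalue, depends smoothly on $\theta$; hence so does $p_{eq}(\theta)$, obtained from the rank-one range of $P(\theta)$ after the above normalization. For the uniform gap, the clean device is to exploit that $\mathcal{E}_0$ is a \emph{fixed} subspace that is invariant for every $\theta$: the restrictions $K_p(\theta)|_{\mathcal{E}_0}$ form a continuous family of endomorphisms of $\R^{N-1}$ whose spectrum is exactly $\mathrm{Sp}(K_p(\theta))\setminus\{0\}$. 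Their spectral abscissa $s(\theta)=\max\{\Re(\lambda):\lambda\in\mathrm{Sp}(K_p(\theta))\setminus\{0\}\}$ depends continuously on $\theta$ and is strictly negative for each $\theta$ by the previous step; compactness of $\T$ then gives $\max_{\theta\in\T}s(\theta)=:-2\beta<0$, which is exactly the claimed uniform bound.

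The same reasoning applies verbatim to $K_q(\theta)$, the left eigenvector $\mathbf{1}$ and the supplementary space $\mathcal{E}_0$ being common to both operators. The main obstacle is the last, uniform-in-$\theta$, separation of $0$ from the rest of the spectrum; reducing it to a continuity and compactness argument carried out on the fixed invariant subspace $\mathcal{E}_0$ is what makes this step painless rather than requiring a delicate tracking of individual eigenvalues.
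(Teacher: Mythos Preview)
Your argument is correct and is exactly the detailed unfolding of what the paper does: the paper's proof consists of a single line invoking the Perron--Frobenius theorem, and your shift-to-a-positive-matrix, identify-the-Perron-root-via-the-left-eigenvector-$\mathbf{1}$, then compactness-for-the-uniform-gap is the standard way to make that invocation precise. One cosmetic point: to use a \emph{fixed} contour in the spectral-projector formula you implicitly need the uniform spectral gap, so it is cleanest to establish that gap first (it only requires continuity of the restricted operator on the fixed subspace $\mathcal{E}_0$) and then deduce smoothness of $p_{eq}$.
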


\begin{proof} 
It is an easy application of 
the Perron-Frobenius theorem.
\end{proof}

\subsection{Reduction of the system}
\label{sec:2.2}
Starting from differential system $(\ref{1})$, we wish to prove that there exists a function $h(\cdot,\theta)$, periodic in $\theta\in\T$, such that some of the solution-components (collected in a vector denoted $Z \in \R^{2N-2}$) of $(\ref{1})$ can be expressed in terms of other solution-components (collected in a vector denoted $X\in \R^2$) and $\theta\in\T$. The center manifold related to our problem shall then be the set $\{h(X,\theta),~X\in\R^2,~\theta\in\T\}$. Generally speaking, the existence of such a function is stated for equations where the transport-matrices $K_p(\theta)$ and $K_q(\theta)$ have eigenvalues lower than a strictly negative constant $-\beta$ for all $\theta\in\T$, or, alternatively, are just vanishing. In order to adapt the proof of existence of a center manifold for  $(\ref{1})$, it is thus necessary  to recast  system $(\ref{1})$ in the form 
\begin{equation}\left\{\begin{array}{ll}
\frac{\mathrm{d}X}{\mathrm{dt}}&=F\left(X,Z,\frac{t}{\ep}\right)\vspace{1mm}\\ 
\frac{\mathrm{d}Z}{\mathrm{dt}}&=\frac{1}{\varepsilon}B\left(\frac{t}{\ep}\right)Z+G\left(X,Z,\frac{t}{\ep}\right)
\end{array}\right.
\label{VarRef}
\end{equation}
where $B(\theta)$ is a periodic matrix related to an exponentialy decreasing resolvent and where $F$ and $G$ have bounded  derivatives w.r.t.  $X$ and $Z$ up to order $r$, are differentiable and periodic w.r.t. $\theta\in\T$, and, in addition, are globally bounded and Lipschitz on $\R^n \times \R^m \times \T$, with a Lipschitz constant independent of $\ep$. Given that matrix $K_p(\theta)$ has a simple eigenvector $p_{eq}(\theta)$ associated to $0$ and other eigenvalues lower than a constant $-\beta$ (see Lemma $\ref{SpecK}$), our first step will consist in treating separately the equation corresponding to $p_{eq}(\theta)$ and the projected equations on $\mathcal{E}_0=\{z\in\R^N,z\cdot\mathbf{1}=0\}$. 
The equations corresponding to variable $q$ will be treated accordingly. Our second step will consist in removing the remaining stiff term through a time-dependent change of variables. Our last step will consist in localising in $X$ and $Z$ the remaining Lotka-Volterra part of the equation, to introduce the functions $F$ and $G$.\\ \\
{\bf Fisrt step:} 
Introducing $x_p(t) = \mathbf{1} \cdot p^\varepsilon(t)$ and $y_p(t) =p^\eps(t)-x_p(t) p_{eq}\left(\frac{t}{\eps}\right) \in\mathcal{E}_0$, one has 
 \begin{equation}\left\{\begin{array}{ll}
 \dot{x}_p&= f^x_1\left(x_p,x_q,y_p,y_q,\frac{t}{\ep}\right)\vspace{2mm}\\
 \dot{y}_p&=\frac{1}{\ep} K_p\left(\frac{t}{\ep}\right) y_p-\frac{1}{\ep}x_p \dot{p}_{eq}\left(\frac{t}{\ep}\right) + f^y_1\left(x_p,x_q,y_p,y_q,\frac{t}{\ep}\right)\vspace{2mm}\\
 \dot{x}_q&= g^x_1\left(x_p,x_q,y_p,y_q,\frac{t}{\ep}\right)\vspace{2mm}\\
 \dot{y}_q&=\frac{1}{\ep} K_q\left(\frac{t}{\ep}\right) y_q-\frac{1}{\ep}x_q \dot{q}_{eq}\left(\frac{t}{\ep}\right) + g^y_1\left(x_p,x_q,y_p,y_q,\frac{t}{\ep}\right)
\end{array}\right.
\label{Main2}
\end{equation}
 with 
  \begin{eqnarray}
 f^x_1\left(x_p,x_q,y_p,y_q,\theta\right)&=\mathbf{1}\cdot f\left(x_p p_{eq}\left(\theta\right)+y_p,x_q q_{eq}\left(\theta\right)+y_q\right), \nonumber \\
 f^y_1\left(x_p,x_q,y_p,y_q,\theta\right)&=f\left(x_p p_{eq}\left(\theta\right)+y_p,x_q q_{eq}\left(\theta\right)+y_q\right)\nonumber\\ 
 &\qquad- f^x_1\left(x_p,x_q,y_p,y_q,\theta\right) \,p_{eq}\left(\theta\right),\nonumber
 \end{eqnarray}
and similarly for $g^x_1$ and $g^y_1$. Now, let $\tilde{K}_p$ and $\tilde{K}_q$ denote the projections on $\mathcal{E}_0$ of the matrices $K_p$ and $K_q$. More precisely, $\tilde{K}_p$ can be defined as $\tilde{K}_p = J_1 K_p J_2$ (and similarly for $\tilde{K}_q$), where $J_1$ is the $(N-1)\times (N-1)$ identity matrix with an additional column of zeros and $J_2$ is $(N-1)\times (N-1)$ identity matrix with an additional row of $-1$. It is then clear that for $y\in\mathcal{E}_0$, $J_2 J_1 y = y$, so that to each eigenvector $y \in \mathcal{E}_0$ of $K_p$ for the eigenvalue $\lambda$, we can associate an eigenvector $J_1 y$ of $\tilde{K}_p$, as can be seen from the relation
$$
\tilde{K}_p (J_1 y) = J_1 K_p (J_2 J_1) y = J_1 K_p  y = \lambda (J_1 y). 
$$
As a consequence, $\tilde{K}_p$ and $\tilde{K}_q$ are invertible matrices, with eigenvalues of real part respectively smaller than $-\beta_p<0$ and $-\beta_q<0$. Equation 
$$ 
\dot{y}_p=\frac{1}{\ep} K_p\left(\frac{t}{\ep}\right) y_p-\frac{1}{\ep}x_p \dot{p}_{eq}\left(\frac{t}{\ep}\right) + f^y_1\left(x_p,x_q,y_p,y_q,\frac{t}{\ep}\right)
$$
can finally  be projected on $\mathcal{E}_0$ by pre-multiplying by $J_1$:
$$
\dot{\tilde y}_p=\frac{1}{\ep} \tilde{K}_p\left(\frac{t}{\ep}\right) \tilde{y}_p -\frac{1}{\ep}x_p \, J_1 \dot{p}_{eq}\left(\frac{t}{\ep}\right) + J_1 f^y_1\left(x_p,x_q,y_p,y_q,\frac{t}{\ep}\right),
$$
and system (\ref{Main2}) is transformed into 
 \begin{equation}\left\{\begin{array}{ll}
 \dot{x}_p&= \tilde f^x_1\left(x_p,x_q,\tilde y_p,\tilde y_q,\frac{t}{\ep}\right)\vspace{2mm}\\
 \dot{\tilde y}_p&=\frac{1}{\ep} \tilde {K}_p\left(\frac{t}{\ep}\right) \tilde{y}_p-\frac{1}{\ep}x_p \dot{\tilde p}_{eq}\left(\frac{t}{\ep}\right) + \tilde f^y_1\left(x_p,x_q,\tilde y_p,\tilde y_q,\frac{t}{\ep}\right)\vspace{2mm}\\
 \dot{x}_q&= \tilde g^x_1\left(x_p,x_q,\tilde y_p,\tilde y_q,\frac{t}{\ep}\right)\vspace{2mm}\\
 \dot{\tilde y}_q&=\frac{1}{\ep} \tilde{K}_q\left(\frac{t}{\ep}\right) \tilde{y}_q-\frac{1}{\ep}x_q \dot{\tilde q}_{eq}\left(\frac{t}{\ep}\right) + \tilde{g}^y_1\left(x_p,x_q,\tilde y_p,\tilde y_q,\frac{t}{\ep}\right)
\end{array}\right.
\label{Main3}
\end{equation}

{\bf Second step:} In order to get rid of the stiff terms in $\frac{1}{\eps} x_p\dot{\tilde p}_{eq}$ and  $\frac{1}{\eps} x_q\dot{\tilde q}_{eq}$ in system $(\ref{Main2})$, we now introduce a change of variables of the form
$$
z_p(t)=\tilde{y}_p(t)-h_p^0\left(x_p(t),\frac{t}{\ep}\right)
$$
where the function $h_p^0(x,\theta)$ is required to be periodic in $\theta$. The aim is obtain a differential equation of the form 
$$
\dot{z}_p(t) =\frac{1}{\ep}\tilde{K}_p\left(\frac{t}{\ep}\right) z_p(t) + \hat{f}\left(x_p,x_q,z_p,z_q,\frac{t}{\ep}\right) 
$$
where $\hat{f}$ is a function without any pre-factor in $\frac{1}{\eps}$. Differentiating the previous equation w.r.t. time leads to
\begin{eqnarray}
\dot{z}_p(t)&= \dot{\tilde y}_p(t) - \partial_x h_p^0\left(x_p(t),\frac{t}{\ep}\right) \dot{x}_p(t) -\frac{1}{\ep} \partial_\theta h_p^0\left(x_p(t),\frac{t}{\ep}\right) \nonumber\\
&= \frac{1}{\ep}\tilde{K}_p\left(\frac{t}{\ep}\right) z_p(t)- \partial_x h_p^0\left(x_p(t),\frac{t}{\ep}\right)  \tilde{f}^x_1\left(x_p,x_q,\tilde y_p,\tilde y_q,\frac{t}{\ep}\right) \nonumber\\&+ \tilde{f}^y_1\left(x_p,x_q,\tilde y_p,\tilde y_q,\frac{t}{\ep}\right)\nonumber\\
&\qquad\qquad -\frac{1}{\ep} \partial_\theta h_p^0\left(x_p(t),\frac{t}{\ep}\right)  +\frac{1}{\ep}\tilde{K}_p\left(\frac{t}{\ep}\right) h_p^0\left(x_p(t),\frac{t}{\ep}\right)-\frac{1}{\ep}x_p \dot{\tilde p}_{eq} \nonumber
\end{eqnarray}
from which it becomes clear that $h_p^0(x,\theta)$ should be taken as a periodic solution of the following equation 
\begin{equation}
\partial_\theta h_p^0\left(x,\theta\right) =\tilde{K}_p\left(\theta\right) h_p^0\left(x,\theta\right) - x \, \dot{\tilde p}_{eq}(\theta).
\label{Prec}
\end{equation}
In order to solve the previous equation, we then consider $\tilde{R}_p(\theta,s)$ its resolvent, defined as the solution of 
$$
\partial_\theta\tilde{R}_p(\theta,s)=\tilde{K}_p\left(\theta\right) \tilde{R}_p(\theta,s),\quad \tilde{R}_p(s,s)=\mathrm{Id}.
$$
The solution of $(\ref{Prec})$ can be obtained as
$$
h_p^0(x,\theta) = \tilde R_p(\theta,0) h_p^0(\cdot,0) - \int_0^\theta \tilde R_p(\theta,\varphi) x \, \dot{\tilde p}_{eq}(\varphi) d\mathrm{\varphi}
$$
For $h_p^0$ to be periodic with period $T$, the following relation should be satisfied
$$
(\mathrm{Id}-\tilde R_p(T,0)) h_p^0(x,0) = -\int_0^T \tilde R_p(T,\varphi) x \, \dot{\tilde p}_{eq}(\varphi)  d\mathrm{\varphi}
$$
and since $\mathrm{Id}-\tilde R_p(T,0)$ is invertible (because of the spectral properties of $\tilde R_p(T,0)$, detailed in Lemma $(\ref{SpecK})$), the only solution of the previous equation is given by 
$$
h_p^0(x,0) = -(\mathrm{Id}-\tilde R_p(T,0))^{-1} \int_0^T \tilde R_p(T,\varphi) x \, \dot{\tilde p}_{eq}(\varphi) d\mathrm{\varphi},
$$
leading to 
\begin{equation}
h_p^0(x,\theta) =- x \,\underbrace{\tilde R_p(\theta,0)(\mathrm{Id}-\tilde R_p(T,0))^{-1} \int_{\theta-T}^T \tilde R_p(0,\varphi) \dot{\tilde p}_{eq}(\varphi)d\mathrm{\varphi}}_{I_p(\theta)}.\end{equation}

Finally, the change of variables is obtained as 
\begin{eqnarray}
z_p(t)&=\tilde{y}_p(t)+x_p(t) \,\tilde R_p(\theta,0)(\mathrm{Id}-\tilde R_p(T,0))^{-1} \int_{\theta-T}^T \tilde R_p(0,\varphi) \dot{\tilde p}_{eq}(\varphi)d\mathrm{\varphi},\nonumber\\
 z_q(t)&=\tilde{y}_q(t)+x_q(t) \,\tilde R_q(\theta,0)(\mathrm{Id}-\tilde R_q(T,0))^{-1} \int_{\theta-T}^T \tilde R_q(0,\varphi) \dot{\tilde q}_{eq}(\varphi)d\mathrm{\varphi},\nonumber
\end{eqnarray}
and the final version of system (\ref{Main2}) as 
\begin{equation}\left\{\begin{array}{ll}
 \dot{x}_p&= f^x_2\left(x_p,x_q,z_p,z_q,\frac{t}{\ep}\right)\\
 \dot{z}_p&=\frac{1}{\ep}\tilde{K}_p\left(\frac{t}{\ep}\right) z_p+ f^y_2\left(x_p,x_q,z_p,z_q,\frac{t}{\ep}\right)\\
 \dot{x}_q&= g^x_2\left(x_p,x_q,z_p,z_q,\frac{t}{\ep}\right)\\
 \dot{z}_q&=\frac{1}{\ep}\tilde{K}_q\left(\frac{t}{\ep}\right) z_q+ g^y_2\left(x_p,x_q,z_p,z_q,\frac{t}{\ep}\right)
\label{Var1}
\end{array}\right.
\end{equation}
with the following definitions
\begin{eqnarray}
 f^x_2\left(x_p,x_q,z_p,z_q,\theta\right)=&f^x_1\left(x_p,x_q,z_p+h_p^0(x_p,\theta),z_q+h_q^0(x_q,\theta),\theta\right)\nonumber\\
  f^y_2\left(x_p,x_q,z_p,z_q,\theta\right)=& f^y_1\left(x_p,x_q,z_p+h_p^0(x_p,\theta),z_q+h_q^0(x_q,\theta),\theta\right)\nonumber\\
&+f^x_2\left(x_p,x_q,z_p,z_q,\theta\right)\tilde R_p(\theta,0)(\mathrm{Id}-\tilde R_p(T,0))^{-1} \int_{\theta-T}^T \tilde R_p(0,\varphi) \dot{\tilde p}_{eq}(\varphi)d\mathrm{\varphi}\nonumber
 \end{eqnarray}
and the equivalent definitions for $g^x_2$ and $g^y_2$. Eventually, denoting
\[X=\left(x_p,x_q\right)^T,\quad Z=\left( z_p,z_q\right)^T \]
 and $B(\theta)=\begin{pmatrix}\tilde{K}_p\left(\theta\right) & 0\\0 & \tilde{K}_q\left(\theta\right)\end{pmatrix}$, we obtain a system of the form 
\begin{equation}
\left\{\begin{array}{lll}
\dot{X}&=\Phi^x\left(X,Z,\frac{t}{\ep}\right), & X\left(t_0\right)=X_0, \\[1 mm]
\dot{Z}&=\frac{1}{\ep}B\left(\frac{t}{\ep}\right)Z+\Phi^z\left(X,Z,\frac{t}{\ep}\right), & Z\left(t_0\right)=Z_0,
\label{EqMain4}\end{array}\right.
\end{equation}
where $\Phi^x$ and $\Phi^z$ are assumed to have continuous derivatives w.r.t. $X$ and $Z$ up to order $r$ and to be periodic and continuously differentiable w.r.t. $\theta\in\T$, and do not have any prefactor in $\frac{1}{\ep}$.\\

Let $\alpha$ be positive. Since $\Phi^x$ and $\Phi^z$ are local Lipschitz, there exists a $T_\alpha>0$ such that for all $\forall t\in[0,T_\alpha], ~\|(X(t),Z(t))\|\leq\alpha$. Hence, we can work on the ball of radius $\alpha$.

{\bf Third step:} We prove that the differential system $(\ref{EqMain4})$ is equivalent to the system
\begin{equation}
\left\{
\begin{array}{lll}
\dot{X}&=F\left(X,Z,\theta\right), & X(0)=X_0, \\[1 mm]
\dot{\theta}&=\frac{1}{\varepsilon}, & \theta(0)=\theta_0:=\frac{t_0}{\eps}, \\[1 mm]
\dot{Z}&=\frac{1}{\ep}B(\theta)Z+G\left(X,Z,\theta\right), & Z(0)=Z_0,
\label{Var}
\end{array}
\right.
\end{equation}
where $F$ and $G$ have continuous derivatives w.r.t. $X$ and $Z$ up to order $r$, are periodic and continuously differentiable w.r.t. $\theta\in\T$ and, {\bf in addition}, are globally bounded and Lipschitz on $\R^n \times \R^m \times \T$. More precisely, for all $\alpha>0$, there exist functions $F$ and $G$ which coincide with $\Phi^x$ and $\Phi^z$ on the set $B_\alpha=\{ (X,Z,\theta) \in \R^n \times \R^m \times \T, \|X\| \leq \alpha \text{ and } \|Z\| \leq \alpha\}$, such that for all $(X,Z,\theta) \in \R^n \times \R^m \times \T$ and $(\tilde X,\tilde Z, \tilde \theta) \in \R^n \times \R^m \times \T$
\begin{equation}\left\{\begin{array}{ll}
\|F(X,Z,\theta)\|+\|G(X,Z,\theta)\| &\leq M \\
\|F(X,Z,\theta)-F(\tilde{X},\tilde{Z},\tilde{\theta})\|&\leq L (\|X-\tilde{X}\|+\|Z-\tilde{Z}\|+|\theta-\tilde{\theta}|)\\
\|G(X,Z,\theta)-G(\tilde{X},\tilde{Z},\tilde{\theta})\|&\leq L (\|X-\tilde{X}\|+\|Z-\tilde{Z}\|+|\theta-\tilde{\theta}|)
\label{In1}
\end{array}\right. 
\end{equation}
Indeed, consider a $\mathcal{C}^\infty$ function $\Psi:\R \mapsto [0,1]$ such that 
\begin{align*}
\Psi(a)=1 \text{ if } a \leq 1,\quad \Psi(a) = 0 \text{ if } a \geq 4,
\end{align*}
define, for a given fixed $\alpha>0$, the functions $F$ and $G$ by
\begin{align*}
F(X,Z,\theta)& =\Phi^x\left(\Psi\left( \frac{\|X\|^2}{\alpha}\right) \, X,\Psi\left( \frac{\|Z\|^2}{\alpha}\right) \, Z,\theta\right),\\
G(X,Z,\theta)&=\Phi^y\left(\Psi\left( \frac{\|X\|^2}{\alpha}\right) \, X,\Psi\left( \frac{\|Z\|^2}{\alpha}\right) \, Z,\theta\right),
\end{align*}
and assume that the norm used here is the $2$-norm. Then $F$ and $G$ retain the smoothness of $\Phi^x$ and $\Phi^y$ and it is easy to check that they are globally bounded and Lipschitz. 

\begin{remark}
System (\ref{Var}) where we have introduced explicitly $s=t-t_0$, $\theta=\frac{s}{\eps} \in \T$ and $\theta_0:=t_0/\eps$   is nothing but the autonomous form of  (\ref{EqMain4}) as long as $X$ remains in the set $B_\alpha$. Proving the existence of a center manifold for the differential system (\ref{Var}) thus automatically states the existence of a center manifold for the original system as long as $X(t)$ remains in $B_\alpha$. Now, since $F$ and $G$ are Lispchitz functions, solutions $X(t)$ and $Z(t)$ of $(\ref{Var})$ indeed exist for all times and all initial values $(X_0,Z_0,\theta_0)$. \\ \\
\end{remark}

\section{A center manifold theorem} \label{sect:manifold}
\subsection{Existence of the center manifold}

\subsubsection{Spectral properties of the resolvent}
In this subsection, we introduce a lemma, which gives us an exponential decrease of the resolvent related to the equation \begin{equation} \dot{Z}(t)=\frac{1}{\ep}B\left(\frac{t}{\ep}\right)Z+G\left(X,Z,\frac{t}{\ep}\right),  Z(0)=Z_0.\label{VarZ}\end{equation}
This is the main tool used to deal with our time dependent case as with the constant transport case presented in the article \cite{Castella}.

\begin{lemma}[Exponential decrease of the resolvent]\label{expo}
For all $t^*>0$, there exists $\mu>0$ such that, denoting $R(t,s)$ the resolvent of equation $(\ref{VarZ})$, we have 
\begin{equation} 
\forall t\in[0,t^*],~\forall s\in[0,t^*] ~\text{with}~t \geq s,\quad \left\|R(t,s)\right\| \leq C e^{-\mu(t-s)}.
\label{exp}
\end{equation}
\end{lemma}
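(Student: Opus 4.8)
The plan is to reduce the statement to a Floquet analysis of the homogeneous $T$-periodic linear equation underlying (\ref{VarZ}), and to deduce the decay of its monodromy from the conservative, positive (Markov) structure of the transport matrices. Writing $R(t,s)$ for the resolvent of the linear part $\dot Z = \frac{1}{\eps} B(t/\eps) Z$ and introducing the fast time $\theta = t/\eps$, I would first check, by differentiating $R(\eps\theta,\eps\sigma)$ in $\theta$, that $R(t,s) = \tilde R(t/\eps, s/\eps)$, where $\tilde R(\theta,\sigma)$ denotes the resolvent of $\partial_\theta W = B(\theta) W$. It therefore suffices to establish an $\eps$-independent estimate $\|\tilde R(\theta,\sigma)\| \le C\, e^{-\nu(\theta-\sigma)}$ for all $\theta \ge \sigma \ge 0$ and some $\nu>0$; the conclusion (\ref{exp}) then follows on $[0,t^*]$ with $\mu = \nu/\eps$. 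Since $B(\theta) = \mathrm{diag}(\tilde K_p(\theta), \tilde K_q(\theta))$ is block diagonal, $\tilde R$ splits into two blocks and it is enough to treat, say, $\tilde R_p$.

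Because $\tilde K_p(\cdot)$ is $T$-periodic, I would invoke Floquet theory and write $\tilde R_p(\theta,0) = P(\theta)\, e^{\theta \Lambda}$ with $P$ a bounded, $T$-periodic, boundedly invertible matrix and $e^{T\Lambda} = \tilde R_p(T,0)$ the monodromy matrix; this gives $\tilde R_p(\theta,\sigma) = P(\theta)\, e^{(\theta-\sigma)\Lambda}\, P(\sigma)^{-1}$. The desired exponential decay is then equivalent to the statement that every Floquet multiplier, i.e.\ every eigenvalue $\lambda_j$ of $\tilde R_p(T,0)$, has modulus strictly less than $1$, the admissible rates being any $\nu < -\frac{1}{T} \max_j \log|\lambda_j|$.

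The step I expect to be the main obstacle is precisely this spectral bound on the monodromy. I would stress that it cannot be read off from the pointwise estimate $\Re\,\mathrm{Sp}(\tilde K_p(\theta)) \le -\beta$ of Lemma \ref{SpecK}: for genuinely time-dependent linear systems, instantaneous negativity of the eigenvalues does not imply stability of the period map. The resolution is to exploit the structure of $K_p$ itself. Let $R_p(\theta,\sigma)$ be the resolvent of $\partial_\theta V = K_p(\theta) V$ on $\R^N$. From $\mathbf 1^T K_p(\theta)=0$ one gets $\mathbf 1^T R_p(\theta,\sigma) = \mathbf 1^T$, so $R_p$ is column-stochastic; since $K_p(\theta)$ has nonnegative off-diagonal entries, $R_p$ is entrywise nonnegative, and the strict positivity of the transfer rates $\sigma^p_{i,j}$ makes $R_p(T,0)$ entrywise strictly positive. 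By Perron–Frobenius the dominant eigenvalue $1$ is simple and all other eigenvalues have modulus $<1$; moreover any eigenvalue $\lambda \neq 1$ satisfies $(\lambda-1)\,\mathbf 1^T u = 0$ for its (generalized) eigenvector $u$, hence $u \in \mathcal E_0$, so $\mathcal E_0$ is exactly the invariant complement of the Perron direction and $R_p(T,0)|_{\mathcal E_0}$ has spectral radius $<1$.

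Finally I would relate the two monodromies. Using that $R_p$ preserves $\mathcal E_0$ together with $J_1 J_2 = \mathrm{Id}_{\R^{N-1}}$ and $J_2 J_1 = \mathrm{Id}$ on $\mathcal E_0$, a short computation gives $\tilde R_p(\theta,0) = J_1 R_p(\theta,0) J_2$, so that $\tilde R_p(T,0)$ and $R_p(T,0)|_{\mathcal E_0}$ share the same spectrum; in particular the Floquet multipliers of $\tilde K_p$ all have modulus $<1$, which closes the Floquet estimate for the $p$-block. The same argument applies verbatim to $\tilde K_q$, and combining the two blocks with the time-rescaling of the first paragraph yields (\ref{exp}).
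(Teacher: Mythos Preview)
Your argument is correct and follows the same two ingredients the paper names---Floquet theory and the Perron--Frobenius structure of the transport operators---so in spirit the approaches coincide. The only genuine difference is in how the spectral gap for the monodromy is obtained: the paper points to the \emph{generalized entropy method} of \cite{Perthame}, i.e.\ a Lyapunov-functional (relative-entropy) argument at the level of the continuous-time positive semigroup, whereas you bypass this by working directly with the discrete-time map $R_p(T,0)$, observing it is a strictly positive column-stochastic matrix, and applying the classical Perron--Frobenius theorem to conclude that its restriction to $\mathcal E_0$ has spectral radius $<1$. Your route is more elementary and fully self-contained, and the identification $\tilde R_p(T,0)=J_1 R_p(T,0) J_2$ together with the invariance of $\mathcal E_0$ is exactly the right bridge; the entropy method, on the other hand, gives the decay without passing through the period map and extends more readily to infinite-dimensional settings, which is presumably why the paper cites it.
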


\begin{proof}
It is an application of the Floquet theory combined with the generalized entropy method, used in the context of the Perron-Frobenius theorem,
see \cite{Perthame}.
\end{proof}

\begin{remark}
For $t\leq s$, we use the fact that $\dot R (t,s)=- B(t) R(t,s)$ to prove that:
\begin{equation} 
\exists \mu>0,~\forall t\in[0,t^*],~\forall s\in[0,t^*] ~\text{with}~s \geq t,\quad \left\|R(t,s)\right\| \leq C e^{\mu(s-t)}.
\end{equation}
\end{remark}

\subsubsection{Existence of a fast time dependent center manifold}

We are about to prove that there is a center manifold linked with the problem, which means that for all $\alpha>0$, for $\ep$ small enough (depending on $\alpha$), for $\|X(t)\|\leq \alpha$, there exists a function $h_\ep$ periodic in $\theta=\frac{t}{\ep}$ such that if $Z(0)=h_\ep(X(0),0)$, \[Z(t)=h_\ep\left(X(t),\theta\right)\] for all $t$.\\

To do so, we adapt the proof developped in \cite{Carr}.

\begin{theorem}[Existence of a fast time dependent center manifold]
Consider the differential system
\begin{equation}
\left\{\begin{array}{lll}
\dot{X}&=F\left(X,Z,\theta\right), \\[1 mm]
\dot{\theta}&=\frac{1}{\varepsilon},\\[1 mm]
\dot{Z}&=\frac{1}{\ep}B(\theta)Z+G\left(X,Z,\theta\right),
\label{Var3}\end{array}\right.
\end{equation}
where $B(\theta)$ is a periodic matrix such that, denoting $R(t,s)$ the resolvent of equation $(\ref{Var3})$, for all $t^*>0$ there exists $\mu>0$ and $\tilde\mu>0$ with: 
\begin{align*} 
\forall (t,s)\in[0,t^*]^2&~\text{with}~t \geq s,\quad \left\|R(t,s)\right\| \leq C e^{-{\mu}(t-s)}\\
&~\text{with}~t < s,\quad \left\|R(t,s)\right\| \leq C e^{{\tilde\mu}(s-t)}.
\end{align*}
In addition, assume that $F$ and $G$ are Lipschitz and have bounded  continuous derivatives w.r.t. $X$ and $Z$ up to order $r$ and are periodic and continuously differentiable w.r.t. $\theta\in\T$. Then, for all $\alpha>0$, there exists $\eps_0>0$ depending on $\alpha$ and a function $h_\ep(X,\theta)$, defined for all for all $0<\eps<\eps_0$, $\|X\|<\alpha$ and $\theta\in\T$,  where $h_\eps$ has continuous derivatives w.r.t. $x$ up to order $r$ and is continuously differentiable w.r.t. $\theta\in\T$, with the following property: for all $X_0\in\R^n$ and $\theta_0\in\T$, the solution $(X(t),\theta(t),Z(t))$ of  (\ref{Var3}) with initial conditions 
$$ X(0)=X_0, \qquad \theta(0)=\theta_0, \qquad Z(0)=h_\ep(X_0,\theta_0),$$
satisfies the relation 
$$
Z(t)=h_\ep(X(t),t/\eps). 
$$ 
The set $\{h_\ep(X,\theta), \, X\in\R^n, \, \theta\in\T\}$ is the center manifold related to our system.
\label{CenterManifold}
\end{theorem}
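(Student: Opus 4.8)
The plan is to follow the Lyapunov--Perron strategy of \cite{Carr}, exploiting that in the extended autonomous system $(\ref{Var3})$ the pair $(X,\theta)$ plays the role of the neutral variables while $Z$ is exponentially contracted, with the decisive feature that the contraction rate $\mu$ of the resolvent $R(t,s)$ is of order $1/\ep$. A function $h_\ep$ whose graph $\{Z=h_\ep(X,\theta)\}$ is invariant must satisfy the mild (integral) form of the invariance relation: writing $(X(s),\theta(s))$ for the \emph{backward} solution of the reduced system $\dot X=F(X,h_\ep(X,\theta),\theta)$, $\dot\theta=1/\ep$ issued from $(X_0,\theta_0)$ at $s=0$, and using variation of constants together with $\|R(0,s)\|\le Ce^{\mu s}\to0$ as $s\to-\infty$, one is led to
\begin{equation}
h_\ep(X_0,\theta_0)=\int_{-\infty}^0 R(0,s)\,G\big(X(s),h_\ep(X(s),\theta(s)),\theta(s)\big)\,ds.
\label{LP}
\end{equation}
I would take $(\ref{LP})$ as the definition of an operator $\mathcal{T}$ acting on $h_\ep$ and construct the manifold as its fixed point.

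First I would introduce the complete metric space $\mathcal{H}$ of bounded functions $h:\R^n\times\T\to\R^m$, periodic in $\theta$ and globally Lipschitz in $(X,\theta)$ with a prescribed constant $\ell$, equipped with the sup-norm. For $h\in\mathcal{H}$ the cut-off (hence globally Lipschitz and bounded) character of $F$ guarantees that the backward reduced flow $X(s)=X^h(s;X_0,\theta_0)$ exists for all $s\le0$ and obeys the Gronwall bound $\|X^h(s;X_0,\theta_0)-X^h(s;\tilde X_0,\theta_0)\|\le e^{L(1+\ell)|s|}\|X_0-\tilde X_0\|$. Since $\|R(0,s)\|\le Ce^{\mu s}$, the integral $(\ref{LP})$ converges and $\|\mathcal{T}h\|_\infty\le CM/\mu$; differentiating the integrand through $X(s)$ and inserting the Gronwall bound gives a Lipschitz constant for $\mathcal{T}h$ controlled by $\int_{-\infty}^0 Ce^{\mu s}L(1+\ell)e^{L(1+\ell)|s|}\,ds=\tfrac{CL(1+\ell)}{\mu-L(1+\ell)}$. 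Because $\mu\sim1/\ep$, both quantities are $O(\ep)$, so for $\ep<\eps_0(\alpha)$ they fall below $\ell$ and $\mathcal{T}$ maps $\mathcal{H}$ into itself. The same competition between the weight $e^{\mu s}$ and the Gronwall factor — now applied to the difference of two trajectories $X^{h_1}(s)$, $X^{h_2}(s)$ and to the explicit dependence of $G$ on its second argument — shows $\|\mathcal{T}h_1-\mathcal{T}h_2\|_\infty\le\kappa\|h_1-h_2\|_\infty$ with $\kappa=O(\ep)<1$. The Banach fixed-point theorem then yields a unique $h_\ep\in\mathcal{H}$.

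Next I would verify the invariance property itself. Given initial data with $Z(0)=h_\ep(X_0,\theta_0)$, I would set $W(t)=Z(t)-h_\ep(X(t),\theta(t))$ along the full solution of $(\ref{Var3})$; using that $h_\ep$ solves $(\ref{LP})$ and the cocycle property of the resolvent, $W$ satisfies a linear equation $\dot W=\tfrac1\ep B(\theta)W+\Delta(t)$ whose inhomogeneity $\Delta$ is Lipschitz in $W$ with a small constant, so the resolvent estimate forces $W\equiv0$, that is $Z(t)=h_\ep(X(t),t/\ep)$. Finally, I would upgrade regularity from Lipschitz to $C^r$ in $X$ and $C^1$ in $\theta$. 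The cleanest route is the fiber-contraction theorem: for each order $k\le r$ one sets up an auxiliary contraction on the pair (function, candidate $k$-th derivative) obtained by formally differentiating $(\ref{LP})$, and checks that the candidate derivatives are genuine continuous derivatives. At order $k$ the relevant integrals carry the weight $e^{(\mu-kL(1+\ell))s}$, so one needs the spectral-gap condition $\mu>kL(1+\ell)$; since $\mu\sim1/\ep$ this holds up to $k=r$ once $\ep$ is small enough, which is precisely why $\eps_0$ must be allowed to depend both on $\alpha$ (through $L$) and on $r$.

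I expect the main obstacle to be the bookkeeping of the $\ep$-dependence rather than any single estimate: the whole construction hinges on the fact, inherited from Lemma \ref{expo} after rescaling time by $1/\ep$, that the contraction rate $\mu$ of the resolvent grows like $1/\ep$, so that it dominates the Lipschitz growth $e^{L(1+\ell)|s|}$ of the backward flow uniformly on $(-\infty,0]$. Verifying this domination at every differentiation order, while keeping track of the non-autonomous, $\theta_0$-shifted resolvent $R^{\theta_0}(0,s)$ (which satisfies the same bounds by periodicity of $B$), is where the care is needed. By contrast, the global existence of backward trajectories and the convergence of the Lyapunov--Perron integrals are guaranteed once and for all by the boundedness and global Lipschitz continuity of $F$ and $G$ produced by the cut-off of the Third step.
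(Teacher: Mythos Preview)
Your proposal is correct and follows the same Lyapunov--Perron strategy as the paper (both adapt \cite{Carr}): define the operator $\mathcal{T}$ by the mild invariance equation, show it contracts on a suitable space because the resolvent decay beats the Gronwall growth of the backward reduced flow, and then upgrade regularity.

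The differences are in execution rather than idea. First, the paper writes $\mathcal{T}$ in the rescaled time $u=s/\ep$, so the smallness comes from an explicit $\ep$ prefactor in front of the integral rather than from $\mu\sim1/\ep$ in the weight; your formulation and theirs are equivalent after the substitution $s=\ep u$. Second, you work in the space of bounded Lipschitz maps (complete under the sup-norm), whereas the paper contracts on a ball in $C^1$ with $\|\partial_X h\|_\infty\le1$ and then in $\mathcal{F}^r$; your choice sidesteps the classical completeness issue but forces you to recover differentiability afterwards, which you do via fiber contraction while the paper simply reruns the fixed-point argument in $\mathcal{F}^r$. Third, for the invariance property the paper argues directly from the integral formula and the flow law $X_h(t',X_h(t,X_0,\theta_0),\theta_0+t/\ep)=X_h(t+t',X_0,\theta_0)$, showing $Z_h(t)=h(X_h(t),\theta_0+t/\ep)$ without differentiating $h$; your route through $W(t)=Z(t)-h_\ep(X(t),\theta(t))$ and a Gronwall argument is also valid, but be careful to derive the equation for $W$ from the integral relation $(\ref{LP})$ and the cocycle property of $R$ rather than from the PDE for $h_\ep$, since at that stage you only know $h_\ep$ is Lipschitz. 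None of these differences is substantive; each buys a small amount of technical cleanliness in a different place.
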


\begin{proof} The proof proceeds in several steps. \\ \\

\textbf{Step 1: Center manifold as the fixed-point of an operator $\cal T$.} Consider a smooth function $(X,\theta) \in \R^n \times \T \mapsto h(X,\theta)$ and initial values $(X_0,Z_0,\theta_0)  \in \R^n \times \R^m \times \T$ and denote $\theta(t,\theta_0)$, $X_h(t,X_0,\theta_0)$ and  $Z_h(t,X_0,\theta_0)$ the solution components of the differential system
 \begin{eqnarray} \label{eq:CS}
 \left\{\begin{array}{llllll}
\dot{X}_h &=& F(X_h,h(X_h,\theta),\theta), & X_h(0,X_0,\theta_0)&=& X_0,\\
\dot{\theta}&=&\frac{1}{\varepsilon}, & \theta(0,\theta_0)&=&\theta_0, \\
\dot{Z}_h&=&\frac{1}{\varepsilon}B(\theta)Z_h(t)+G(X_h,h(X_h,\theta),\theta), & Z_h(0,X_0,\theta_0)&=& Z_0.
\end{array} \right. 
\end{eqnarray}
Given that $X_h$ can be obtained independently of $Z_h$, $Z_h$ can in turn be obtained as follows: If we denote by $R(s,s_0)$ the resolvent of the differential equation
\[\left\{\begin{array}{ll}
\frac{\mathrm{d}R(s,s_0)}{\mathrm{ds}} &=B(s)R(s,s_0)\\
R(s_0,s_0)&=\mathrm{Id}
\end{array} \right. \]
and $\tilde Z_h(s,X_0,\theta_0):=Z_h(\eps s,X_0,\theta_0)$ the solution of \\
$
\dot{\tilde Z}_h(s,X_0,\theta_0) = B(\theta_0+s) \tilde Z_h(s,X_0,\theta_0) + \eps b_h(s,X_0,\theta_0)
$
with 
$$
b_h(s,X_0,\theta_0)=G(X_h(\eps s,X_0,\theta_0),h(X_h(\eps s,X_0,\theta_0),\theta_0+s),\theta_0+s)
$$
then $\tilde Z_h$ may then be written as
$$
\tilde Z_h(s,X_0,\theta_0) = R(\theta_0+s,\theta_0) Z_0 + \eps \int_{0}^{s} R(\theta_0+s,\theta_0+u) b_h(u, X_0, \theta_0) \mathrm{du}
$$
so that 
\begin{eqnarray} \label{Eqy2} \hspace{-1cm}
Z_h(t,X_0,\theta_0)
&=& R\left(\theta_0+t/\eps,\theta_0\right) \Big( Z_0
+\eps\int_{0}^{t/\eps} R(\theta_0,\theta_0+u)b_h(u, X_0, \theta_0) \mathrm{du} \Big) 
\end{eqnarray}
Now, given that $Z_h(t,X_0,\theta_0)$ must coincide with $h(X_h(t,X_0,\theta_0),\theta_0+t/\eps)$ for all values of $t$, $X_0$ and $\theta_0$, it should be in particular bounded for all times given that  $X_h(t,X_0,\theta_0)$ is and that $h(X,\theta)$ is smooth in $X$ and periodic w.r.t. $\theta$. This means that $Z_0$ can not be chosen freely but should rather be an initial value that makes $Z_h(t,X_0,\theta_0)$ bounded for all times. The only choice consists in taking 
$$
Z_0 = - \eps \lim_{t \rightarrow -\infty} \int_{0}^{t/\eps} R(\theta_0,\theta_0+u)b_h(u,X_0,\theta_0) \mathrm{du} = 
\eps \int_{-\infty}^{0} R(\theta_0,\theta_0+u)b_h(u,X_0,\theta_0) \mathrm{du}
$$
and accordingly 
\begin{eqnarray} 
Z_h(t,X_0,\theta_0) &=& \eps  \int_{-\infty}^{t/\eps} R(\theta_0+t/\eps,\theta_0+u)b_h(u,X_0,\theta_0) \mathrm{du}.\label{eq:Zh} 
\end{eqnarray}
We finally define ${\cal T} h $ as the function which maps  $(X_0,\theta_0)\in\R^n\times\T$ to
\begin{align} \label{eq:T}
({\cal T} h) (X_0,\theta_0)  =\eps \int_{-\infty}^{0} R(\theta_0,\theta_0+u)b_h(u,X_0,\theta_0) \mathrm{du}.
\end{align}
Let us show that if $h$ is a fixed point of $\cal T$, then the relation $Z_h(0,X_0,\theta_0)=h(X_0,\theta_0)$ implies that $Z_h(t,X_0,\theta_0)=h(X_h(t,X_0,\theta_0),\theta_0+t/\eps)$ for all $t$. To this aim, we thus consider fixed $X_0$ and $\theta_0$ and use the definition of ${\cal T} h$, namely
\begin{align*}
&h(X_h(t,X_0,\theta_0),\theta_0+t/\eps)\qquad\\
&\qquad\qquad=\eps \int_{-\infty}^{0} R(\theta_0 +t/\eps, \theta_0 + t/\eps + u) b_h(u,X_h(t,X_0,\theta_0),\theta_0+t/\eps)\mathrm{du}.
\end{align*}
Owing to the group law
$$
\forall (t,t'), \quad X_h(t',X_h(t,X_0,\theta_0),\theta_0+t/\eps) = X_h(t+t',X_0,\theta_0),
$$
we have
$$
b_h(u,X_h(t,X_0,\theta_0),\theta_0+t/\eps) = b_h(u+t/\eps,X_0,\theta_0),
$$
which leads to 
\begin{align*}
h(X_h(t,X_0,\theta_0),\theta_0+t/\eps)&=\eps \int_{-\infty}^{0} R(\theta_0 +t/\eps, \theta_0 + t/\eps + u) b_h(u+t/\eps,X_0,\theta_0)\mathrm{du} \\
&=\eps \int_{-\infty}^{t/\eps} R(\theta_0 +t/\eps, \theta_0 + u) b_h(u,X_0,\theta_0)\mathrm{du} \\
&= Z_h(t,X_0,\theta_0)
\end{align*}
where the last equality follows from (\ref{eq:Zh}). \\ \\
\textbf{Step 2: $\mathcal{T}$ maps $\mathcal{F}$ to $\mathcal{F}$.} 
Define $\mathcal{F}$ as the functional space
$$
\mathcal{F}=\{h\in {\cal C}^1(\R^n\times\T, \R^{m}),~\text{such that}~\|h\|_{\infty}\leq \alpha~\text{and}~ \|\partial_x h\|_{\infty}\leq 1\}
$$
where $\|\partial_x h\|_{\infty}=\|\partial_x h\|_{L^\infty\left(\R^n\times\T,\mathcal{L}\left(\R^n,\R^{m}\right)\right)}$. We wish to show now that $\cal T$ maps $\cal F$ to itself: given $h \in {\cal F}$ and the definition (see (\ref{eq:T})) of ${\cal T}h$, we have for all $(X_0,\theta_0)\in\R^n\times\T$
\begin{align*}
\|\mathcal{T}h(X_0,\theta_0)\|&\leq \eps \int_{-\infty}^{0} \|R(\theta_0,\theta_0 + u)\| \|b_h(u, X_0, \theta_0) \|  \mathrm{du} \\
&\leq\ep  \int_{-\infty}^{0} C e^{\beta  u} \| G(X_h(\ep s,X_0,\theta_0),h(X_h(\ep s,X_0,\theta_0),\theta_0+u),\theta_0+u)\|\mathrm{du}.
\end{align*}
According to $(\ref{In1})$, we have $\|F(X_h,h(X_h,\theta),\theta)\| \leq M$, so that 
\begin{align*}
\|\mathcal{T}h(X_0,\theta_0)\| &\leq \ep \int_{-\infty}^{0} C e^{\beta  u}  M \mathrm{du} = C \eps \frac{ M}{\beta} \leq \alpha 
\end{align*}
provided $\eps < \eps_1:= \frac{\alpha \, \beta}{C \, M}$. Hence, for $\eps <\eps_3$, $\|{\cal T} h\|_\infty \leq \alpha$. Now, $h$ and $F$ are periodic w.r.t. $\theta_0$, so that $X_h$ and $b$ are periodic as well and since $R(\theta_0,\theta_0+u)$ is, ${\cal T} h$ is clearly periodic w.r.t. $\theta_0$. It remains to prove that $\|\partial_{X} (\mathcal{T}h)\|_\infty\leq 1$. To this aim, we first estimate $\partial_{X_0} X_h(t,X_0,\theta_0)$ from the variational equation 
\begin{align*}
\left(\partial_{X_0} X_h \right)&(t,X_0,\theta_0)={\rm Id} + \int_0^t \Big(\partial_{X}F(X_h,h(X_h,\theta_0+s/\eps),\theta_0+s/\eps)) \cdot \partial_{X_0} X_h \\
&+\partial_{Z}F(X_h,h(X_h,\theta_0+s/\eps),\theta_0+t/\eps) \cdot (\partial_{X} h)(X_h,\theta_0+s/\eps) \cdot \partial_{X_0} X_h \Big) ds
\end{align*}
as follows 
\begin{align*}
\left\| \left(\partial_{X_0} X_h\right) \right\| \leq 1+ 2 L \int_0^{|t|} \|\partial_{X_0} X_h\| ds,
\end{align*}
which, owing to Gronwall lemma, leads to 
\begin{equation}
\forall t\in\R,~\|\partial_{x_0} X_h(t,.,.) \| \leq e^{2L |t|}.
\label{Step5x}
\end{equation}
Substituting this estimate into the equation obtained by differentiating ${\cal T}h$ we get
\begin{align*}
\|  \partial_{X_0} \mathcal{T}h(X_0,\theta_0)\| &\leq C \eps \int_{-\infty}^{0} e^{\beta u} \| \partial_X G \cdot \partial_{X_0} X_h +\partial_Z G  \cdot \partial_{X} h \cdot \partial_{X_0} X_h\| \mathrm{du}
\end{align*}
where all arguments of $h$, $X_h$, $F$ and $G$ are as in (\ref{eq:T}) and have been omitted for the sake of clarity. Using (\ref{Step5x}) it then follows that 
\begin{align*}
\|  \partial_{X_0} \mathcal{T}h(X_0,\theta_0)\| &\leq 2 \,C \, \eps L  \int_{-\infty}^{0} e^{\beta u} e^{2  \eps  L |u|}\mathrm{du} = \frac{2 C \eps L}{\beta-2 \eps L}
\end{align*}
provided $\beta-2 \eps L >0$ and this last term is less than $1$ for $\eps < \eps_2:= \frac{\beta}{(2C+1)L}$. \\ \\
\textbf{Step 3: $\mathcal{T}$ is a contraction.} Consider $h_1$ and $h_2$ two functions of $\mathcal{F}$. The corresponding functions $X_{h_1}$ and $X_{h_2}$ satisfy
$$
\|(X_{h_1}-X_{h_2})(t,X_0,\theta_0)\| \leq L \int_{0}^t  \Big( \|X_{h_1}-X_{h_2}\| + \|h_1(X_{h_1},\theta_0+u)-h_2(X_{h_2},\theta_0+u) \|\Big) du
$$
where, once again, the arguments $(u,X_0,\theta_0)$ of $X_{h_1}$ and $X_{h_2}$ on the r.h.s have been omitted for brevity. It is straightforward to write, say with $\theta=\theta_0+u$, that 
\begin{align*}
\|h_1(X_{h_1},\theta)-h_2(X_{h_2},\theta) \| &\leq \|h_1(X_{h_1},\theta)-h_1(X_{h_2},\theta) \|
+\|h_1(X_{h_2},\theta)-h_2(X_{h_2},\theta) \|\\
&\leq \|X_{h_1}-X_{h_2}\|+\|h_1-h_2\|_{\infty}.
\end{align*}
Hence, 
$$
\|(X_{h_1}-X_{h_2})(t,X_0,\theta_0)\| \leq  \Big| \int_{0}^t  L \|X_{h_1}-X_{h_2}\| du \Big| + |t| L \|h_1-h_2\|_{\infty}
$$
and by Gronwall lemma, we obtain 
\begin{equation*}
\left\|X_{h_1}(t,X_0,\theta_0)-X_{h_2}(t,X_0,\theta_0)\right\|_\infty\leq L |t| e^{L |t|} \|h_1-h_2\|_\infty.
\end{equation*}
Consequently, we have 
\begin{align}
\|\mathcal{T}h_1(X_0,\theta_0)&-\mathcal{T}h_2(X_0,\theta_0)\| \leq \eps L \int_{-\infty}^{0} e^{\beta u}  \Big(\|X_{h_1}(\eps u,X_0,\theta_0)-X_{h_2}(\eps u,X_0,\theta_0)\| \nonumber \\
&+\|h_1(X_{h_1}(\eps u,X_0,\theta_0),\theta_0+u)-h_2(X_{h_2}(\eps u,X_0,\theta_0),\theta_0+u)\|\Big)\mathrm{du} \nonumber \\
&\leq \eps L \|h_1-h_2\|_\infty \int_{-\infty}^{0}  e^{\beta u} \Big(2 \eps |u| L e^{\eps L |u|} +1\Big)\mathrm{du} \nonumber \\
&\leq \Big(\frac{\eps L}{\beta}+\frac{\eps^2 L^2}{(\beta-\eps L)^2} \Big) \|h_1-h_2\|_\infty
\label{Step6x}
\end{align}
so that $\mathcal{T}:\mathcal{F}\to \mathcal{F}$ becomes a contraction for small enough values of $\eps$. \\ \\
\textbf{Step 4: Smoothness of $h$.} The idea is to repeat the proof of Step 4. within the set 
$$
{\mathcal F}^r =\{h\in {\cal C}^r(\R^n\times\T, \R^{m}),~\text{such that}~\|h\|_{\infty}\leq \alpha~\text{and}~ \|\partial_x^k h\|_{\infty}\leq 1 \text{ for all } k=1,\ldots,r\}
$$
Since all derivatives up to order $r$ of $F$ and $G$ are bounded, inequality (\ref{Step6x}) is simply replaced by \begin{align*}
\| \partial_{x}^k( \mathcal{T}h_1(X_0,\theta_0)&- \mathcal{T}h_2(X_0,\theta_0))\| \\
&\leq C \,\eps \, \left(\|h_1-h_2\|_\infty +\|\partial_x(h_1-h_2)\|_\infty +\ldots  + \|\partial_x^k(h_1-h_2)\|_\infty \right)
\end{align*}
where $C$ is a constant depending on $k$, $\alpha$ and $\beta$ for small enough $\eps$ and 
where the norm used is the induced norm on $k$-linear functions. By choosing $\eps$ small enough, we again obtain a contraction map. The smoothness of $h$ in $x$ thus follows. By definition, it is obviously $C^1$ w.r.t. $\theta$. 
\end{proof}

\begin{remark}
The function $h$ also depends smoothly on $\eps$, as it is obtained as the limit of the convergent iteration 
$h=\lim_{k \rightarrow \infty} {\cal T}^k h_0$ from a $\eps$-independent $h_0$. It is thus $C^\infty$ w.r.t. $\eps$. 
\end{remark}

We have proved the existence of a center manifold. Now, we want to prove that when we do not have initial conditions such that $Z_0=h(X_0,0)$, the exact solution of the differential system $(\ref{VarRef})$ goes exponentially fast to the center manifold.

\begin{theorem}[Error relative to the center manifold]
\label{Error}
Denote $X(t)$ and $Z(t)$ the solutions of system (\ref{Var3}) with prescribed initial values. Under the assumptions of Theorem \ref{CenterManifold}, the following assertions hold true:
\begin{enumerate}
\item {\bf Exponential convergence towards the center manifold: } There exist strictly positive constants $C$ and $\tilde \mu$ such that
$$
\forall t\geq 0,\quad\left\|Z(t) -h_\ep\left(X(t),\theta_0+\frac{t}{\varepsilon}\right)\right\|\leq C e^{-\tilde \mu \frac{t}{\ep}}.
$$

\item {\bf Shadowing principle for the complete system:}  There exists a constant $C>0$ , independent of $\eps$ and $t^*$, such that for any $t^*\geq 0$, there exists an altered initial data $X_0^\ep$ (implicitly depending on $t^*$), such that the solution components of the reduced system
\[\left\{\begin{array}{ll}
\frac{\mathrm{d} X_h}{\mathrm{dt}}&=F\left(X_h,h_\ep\left(X_h,\theta_0+\frac{t}{\varepsilon}\right),\theta_0+\frac{t}{\eps}\right)\\
X_h(0)&=X_0^\eps\\
Z_h(t) &=h_\ep\left(X_h(t),\theta_0+\frac{t}{\varepsilon}\right)
\end{array}\right. \]
satisfy the following error estimate on $[0,t^*]$
$$
\forall t\in [0,t^*],\quad\|Z(t) -Z_h(t)\|+\|X(t) -X_h(t)\| \leq C e^{-\hat\mu \frac{t}{\ep}}.
$$
Moreover, if the solution $X$ is bounded on $\R^+$, we can take $t^*=+\infty$ in the above estimates.
\end{enumerate}
\end{theorem}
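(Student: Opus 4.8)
The plan is to handle the two assertions separately: the first is a Gronwall estimate built on the invariance of the manifold, while the second is a genuine shadowing argument whose crux is the lack of contraction in the center ($X$) direction.

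\textbf{Assertion 1 (convergence to the manifold).} First I would introduce the deviation $W(t) := Z(t) - h_\ep(X(t),\theta_0+t/\ep)$ and differentiate it in time. Since $h_\ep$ is the fixed point of $\mathcal T$ produced by Theorem \ref{CenterManifold}, differentiating the identity $Z(t)=h_\ep(X(t),\theta_0+t/\ep)$ along an on-manifold solution yields the invariance relation
$$\partial_X h_\ep \cdot F(X,h_\ep,\theta)+\tfrac1\ep\partial_\theta h_\ep = \tfrac1\ep B(\theta)h_\ep + G(X,h_\ep,\theta),$$
valid for all $(X,\theta)$. Substituting this into $\dot W = \dot Z - \partial_X h_\ep\cdot\dot X - \tfrac1\ep\partial_\theta h_\ep$ with the \emph{true} dynamics $\dot X=F(X,Z,\theta)$, $\dot Z=\tfrac1\ep B(\theta)Z+G(X,Z,\theta)$ makes the stiff $\tfrac1\ep Bh_\ep$ terms cancel and gives
$$\dot W = \tfrac1\ep B(\theta)W + \big[G(X,Z,\theta)-G(X,h_\ep,\theta)\big] - \partial_X h_\ep\cdot\big[F(X,Z,\theta)-F(X,h_\ep,\theta)\big].$$
By the Lipschitz bounds (\ref{In1}) and $\|\partial_X h_\ep\|_\infty\le 1$, the last two brackets are bounded by $2L\|W\|$. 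Writing $W$ through variation of constants with the resolvent of Lemma \ref{expo}, which contracts exponentially on the fast time scale $s=t/\ep$, and applying Gronwall after rescaling to $s$ yields $\|W(t)\|\le C e^{-\tilde\mu t/\ep}$ for $\ep$ small enough, which is exactly Assertion 1.

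\textbf{Assertion 2 (shadowing).} Setting $\tilde F(X,\theta):=F(X,h_\ep(X,\theta),\theta)$ for the reduced field and $U(t):=X(t)-X_h(t)$, the true and reduced $X$-equations combine into
$$\dot U = \big[\tilde F(X,\theta)-\tilde F(X-U,\theta)\big] + \rho(t),\qquad \rho(t):=F(X,Z,\theta)-\tilde F(X,\theta),$$
where $\|\rho(t)\|\le L\|W(t)\|\le C e^{-\tilde\mu t/\ep}$ is controlled by Assertion 1 and depends only on the true trajectory. The decisive point is that I would \emph{not} integrate this forward: since $\tilde F$ is only Lipschitz, the center direction carries no contraction and forward integration with $U(0)=0$ only yields a bound of size $O(\ep)\,e^{2Lt}$, which neither decays nor is uniform in $t^*$. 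Instead I impose the terminal matching $X_h(t^*)=X(t^*)$, i.e. $U(t^*)=0$, and recast the problem as the fixed-point equation
$$U(t)=-\int_t^{t^*}\big[\tilde F(X(u),\theta)-\tilde F(X(u)-U(u),\theta)\big]\,du-\int_t^{t^*}\rho(u)\,du.$$
I would then run a contraction argument for the associated operator in the weighted space $\{U\in C([0,t^*],\R^n):\|U\|_*=\sup_t e^{\nu t/\ep}\|U(t)\|<\infty\}$ with $0<\nu<\tilde\mu$. The Lipschitz constant $2L$ of $\tilde F$ times $\int_t^{t^*}e^{-\nu u/\ep}\,du\le\tfrac\ep\nu e^{-\nu t/\ep}$ gives a contraction factor $\tfrac{2L\ep}{\nu}<1$ for small $\ep$, while the source contributes $\tfrac{C\ep}{\tilde\mu}e^{-\nu t/\ep}$; the unique fixed point satisfies $\|U(t)\|\le C\ep\, e^{-\hat\mu t/\ep}$. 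By construction $X_h=X-U$ solves the reduced equation with $X_0^\ep:=X_0-U(0)$, a perturbation of size $O(\ep)$ depending on $t^*$ through the terminal matching, and the $Z$-estimate follows from $\|Z-Z_h\|\le\|W\|+\|h_\ep(X,\cdot)-h_\ep(X_h,\cdot)\|\le\|W\|+\|U\|$, again using $\|\partial_X h_\ep\|_\infty\le 1$, giving the claim with $\hat\mu=\min(\tilde\mu,\nu)$.

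\textbf{The $t^*=+\infty$ case and the main obstacle.} When $X$ is bounded on $\R^+$ all constants above are uniform in $t^*$, so the integral equation remains well posed with $\int_t^\infty$ in place of $\int_t^{t^*}$; passing to the limit (the fixed points converge because the contraction is uniform) produces a single $X_0^\ep$ for which the estimates hold on all of $\R^+$. The real obstacle is precisely this organizing principle: because the center variable has no intrinsic decay, naive forward shadowing fails, and exponential, $t^*$-uniform control can only be obtained by integrating the error equation backward from the terminal time in an exponentially weighted norm. Checking that this backward operator is a contraction — that the loss from the non-contracting center direction is beaten by the smallness factor $\ep$ coming from the fast time scale — is the heart of the matter; the remainder is bookkeeping with (\ref{In1}) and Lemma \ref{expo}.
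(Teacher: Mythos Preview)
Your proof is correct and, for Assertion~1, essentially coincides with the paper's: both derive the invariance PDE for $h_\ep$, form the deviation $W=Z-h_\ep(X,\theta)$, pass through Duhamel with the exponentially contracting resolvent, and close by Gronwall. Your version is in fact slightly more careful, since you retain the term $-\partial_X h_\ep\cdot\big[F(X,Z,\theta)-F(X,h_\ep,\theta)\big]$ in $\dot W$; the paper silently drops it, though it is harmless because it is also $O(L\|W\|)$.

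For Assertion~2 the key idea is the same in both arguments: since the center direction carries no contraction, one imposes the terminal matching $X_h(t^*)=X(t^*)$ and propagates the error backward. The paper does this in the most direct way---it defines $X_h$ as the (unique, Lipschitz) backward solution of the reduced equation from the terminal data $X(t^*)$, writes the integral inequality for $\|X-X_h\|$ on $[t,t^*]$, and applies Gronwall, obtaining the exponential bound with an explicit constant $\frac{\ep CL}{\tilde\mu}\big(1-\frac{\beta}{\beta-\tilde\mu/\ep}\big)$. You instead recast the same backward problem as a fixed-point for $U=X-X_h$ in the weighted norm $\sup_t e^{\nu t/\ep}\|U(t)\|$ and exhibit a contraction with factor $2L\ep/\nu$. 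Both routes are valid; yours introduces a bit more machinery than strictly needed once $X_h$ exists, but it pays off by making the constants manifestly independent of $t^*$ and by rendering the passage $t^*\to\infty$ a clean limit of contractions, whereas the paper has to inspect the explicit Gronwall output to check uniformity.
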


\begin{proof} By construction of the function $h_\eps$, it satisfies for all $X \in \R^n$ and for all $\theta \in \T$
$$
\frac{1}{\eps} B(\theta) h_\eps(X,\theta) + G(X,h_\eps(X,\theta),\theta) = \frac{1}{\eps} \partial_\theta h_\eps(X,\theta) + \partial_X h_\eps(X,\theta) F(X,h_\eps(X,\theta),\theta).
$$
Hence, 
\begin{align*}
&\hspace{-0.5cm}\frac{\mathrm{d}h_\eps\left(X(t),\theta_0+\frac{t}{\ep}\right)}{\mathrm{dt}}\\
&=\frac{1}{\eps} B\left(\theta_0+\frac{t}{\eps}\right) h_\eps \left(X(t), \theta_0+\frac{t}{\eps}\right)+G\left(X(t),h_\eps\left(X(t), \theta_0+\frac{t}{\ep}\right),\theta_0+\frac{t}{\ep}\right). 
\end{align*}
The Duhamel formula then leads to 
\begin{align*}
Z(t)-h_\eps\left(X(t),\theta_0+\frac{t}{\eps}\right)&= R\left(\theta_0 +\frac{t}{\ep},\theta_0\right)\left(Z_0-h_\eps\left(X_0,\theta_0\right)\right)\\
&\quad\quad+\eps \int_0^{\frac{t}{\ep}}R\left(\theta_0 +\frac{t}{\ep},\theta_0 +u\right) \Delta G(\eps u) \mathrm{du}
\end{align*}
with 
$$
(\Delta G)(\eps u):= G(X(\eps u), Z(\eps u),\theta_0+u)-G(X(\eps u),h_\eps\left(X(\eps u),\theta_0+u\right),\theta_0+u).
$$
We then obtain the following inequality 
\begin{align*}
\left\|Z(t)-h_\eps\left(X(t),\theta_0+\frac{t}{\ep}\right)\right\|&\leq C e^{-\mu \frac{t}{\ep}}\left\|Z_0-h_\eps (X_0,\theta_0))\right\| \\
&\qquad+C \ep \int_0^{\frac{t}{\ep}} e^{-\mu\left(\frac{t}{\ep}-u\right)} \left\|(\Delta G)(\eps u)\right\|\mathrm{du}
\end{align*}
with 
$$
\|(\Delta G)(\eps u) \| \leq L  \|Z(\eps u)-h_\eps(X(\eps u),\theta_0+u) \|.
$$
Denoting 
$$
r(t) = e^{\frac{\mu}{\eps} t} \|Z(t)-h_\eps (X(t),\theta_0+t/\eps)\|,
$$ 
we thus have 
$$
r(t) \leq C r(0) + C L  \int_0^{t}  r(u) \mathrm{du}
$$
and upon using Gronwall lemma we obtain $r(t) \leq C r(0) e^{C L t}$. Going back to the quantity $ e^{\frac{\mu}{\eps} t} r(t)$ we finally get
\begin{align*}
\left\| Z(t)-h\left(X(t),\frac{t}{\ep}\right)\right\|&\leq C e^{(C L-\frac{\mu}{\ep}) t} \|Z_0-h_\eps(X_0,\theta_0)\| 
\end{align*}
and the first statement follows with $0<\tilde \mu < \mu -\ep C L$ for $0 < \eps < \frac{\mu}{C L}$. \\
Consider $t^* > 0$ and denote $X^*=X(t^*)$ the values of the solution of (\ref{Var3}) with initial conditions $(X,Z,\theta)(0) = (X_0,Z_0,\theta_0)$. The function $X \mapsto F(X,h_\eps(X,\theta),\theta)$ being a Lipschitz function w.r.t. $X$, the following system  
\begin{equation} \label{eq:sr}
\left\{\begin{array}{ll}
\dot{X}_h&=F\left(X_h,h_\eps \left(X_h,\theta_0+\frac{t}{\eps}\right),\theta_0 + \frac{t}{\ep}\right)\\
X_{h}(t^*)&=X^*
\end{array}\right. 
\end{equation}
has a unique solution on the interval $[0,t^*]$ so that we may consider $X^*_0:= X_h(0)$. Now, function $X(t)$ satisfies the differential equation 
$$
\dot X(t) = F(X(t),h_\eps(X(t),\theta_0+t/\eps),\theta_0+t/\eps) + \delta(t)
$$
where 
$$
\delta(t) = F(X(t),Z(t),\theta_0+t/\eps) - F(X(t),h_\eps(X(t),\theta_0+t/\eps),\theta_0+t/\eps)
$$
may be bounded, according to the first statement of this theorem, as follows
$$
\|\delta(t) \| \leq L \|Z(t)-h_\eps(X(t),\theta_0+t/\eps)\| \leq C L e^{-\tilde \mu \frac{t}{\ep}}. 
$$
We then have 
\begin{equation*}
\dot X  - \dot X_h =F(X,h_\eps(X, \theta_0+t/\eps),\theta_0+t/\eps)-F(X_h,h_\eps(X_h, \theta_0+t/\eps),\theta_0+t/\eps) + \delta(t)
\end{equation*}
with $X(t^*)-X_h(t^*)=0$. Integrating backward from $t^*$ to $t$ for $t\leq t^*$ and taking norms of both sides, it follows that 
\begin{align*}
\|X(t)-X_h(t)\|  & \leq \int_{t}^{t^*} L \Big( (1+\|\partial_X h_\eps\|_{\infty}) \left\| X(u)-X_h(u)\right\| + C e^{-\tilde \mu \frac{u}{\ep}}  \Big) \mathrm{du} \\
& \leq \int_{t}^{t^*} \beta \left\| X(u)-X_h(u)\right\| \mathrm{du} + \frac{\ep C L}{\tilde \mu} \left(e^{- \tilde \mu \frac{t}{\ep}}-e^{- \tilde \mu \frac{t^*}{\ep}}\right)
\end{align*}
with $\beta:=L  (1+\|\partial_X h_\eps\|_{\infty})$. We now apply Gronwall lemma and get
\begin{align*}
\|X(t)-X_h(t)\|  & \leq \frac{\ep C L}{\tilde \mu} \left(e^{- \tilde \mu \frac{t}{\ep}}-e^{- \tilde \mu \frac{t^*}{\ep}}\right) +  \frac{\ep C L}{\tilde \mu} \beta \int_{t}^{t^*} \left(e^{- \tilde \mu \frac{s}{\ep}}-e^{- \tilde \mu \frac{t^*}{\ep}}\right) e^{\beta (s-t)}\mathrm{ds}\\
& \leq \frac{\ep C L}{\tilde \mu} \left( 1-\frac{\beta}{\beta-\frac{\tilde\mu}{\ep}} \right)e^{-\tilde\mu \frac{t}{\ep}} + \frac{\ep C L}{\tilde \mu}  \beta\left( \frac{1}{\beta-\frac{\tilde\mu}{\ep}}-\frac{1}{\beta} \right)e^{-\beta \frac{t}{\ep}+(\beta-\tilde\mu)\frac{t^*}{\ep}}
\end{align*}
and as $\tilde\mu$ goes to $+\infty  $ as $\ep$ goes to $0$, we have $\left( \frac{1}{\beta-\frac{\tilde\mu}{\ep}}-\frac{1}{\beta} \right) <0$ for $\ep$ small enough. Hence, we get:
\[  \|X(t)-X_h(t)\| \leq \frac{\ep C L}{\tilde \mu} \left( 1-\frac{\beta}{\beta-\frac{\tilde\mu}{\ep}} \right)e^{-\tilde\mu \frac{t}{\ep}}. \]
\end{proof}

\subsection{Approximation of the center manifold}
In this section, we aim at showing that $h_\eps$ can be expanded in powers of $\eps$ up to every order $k \leq r$, where each coefficient-function can be computed explicitly through a recursive relation.
\begin{theorem}[Approximation of $h_\eps$] Under the assumptions of Theorem \ref{CenterManifold}, the following statements hold true: \\ 
{\bf 1.} The function $h_\eps$ satisfies  the following partial differential equation for all $X \in \R^n$ and all $\theta \in \T$
\begin{equation}
\frac{1}{\eps}\Big( \partial_\theta h_\ep(X,\theta) - B(\theta) h_\eps(X,\theta) \Big) = G(X,h_\eps(X,\theta),\theta)- \partial_X h_\eps(X,\theta) F\left(X,h_\eps(X,\theta),\theta\right).
\label{pde}
\end{equation}
{\bf 2.} The terms of the formal expansion $h_\eps = \eps h^1 + \eps^2 h^2 + \cdots$ of $h_\eps$ are defined in a unique way by an equation of the form 
\begin{equation*} 
\forall n\in\N,\quad B(\theta) h^{n+1}-\partial_\theta h^{n+1}=J_n(X,\theta)
\label{Eqhn}
\end{equation*}
where $J_n$ depends only on derivatives of $F$ and $G$ up to order $n$. Furthermore, the function 
$\tilde h_\eps:= \eps h^1 + \eps^2 h^2 + \cdots + \eps^r h^r$ satisfies equation (\ref{pde}) up to an error term of size 
$\eps^{r-1}$ and one has the following estimate for some positive constant $C_r$
\begin{equation} \label{eq:norminf}
\|h_\eps-\tilde h_\eps\|_\infty\leq C_r \, \eps^{r}.
\end{equation}
\label{Computeh}
\end{theorem}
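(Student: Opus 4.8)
The plan is to read off (\ref{pde}) directly from the invariance relation of Theorem~\ref{CenterManifold}, and then to feed it a formal $\eps$-expansion. For the first assertion, I would fix an arbitrary pair $(X_0,\theta_0)$ and let $(X(t),\theta_0+t/\eps,Z(t))$ be the trajectory of (\ref{Var3}) started on the manifold, so that $Z(t)=h_\eps(X(t),\theta_0+t/\eps)$ holds for all $t$. Differentiating this identity in $t$ and using $\dot X=F(X,h_\eps,\theta)$, $\dot\theta=1/\eps$ gives $\dot Z=\partial_X h_\eps\cdot F+\tfrac1\eps\partial_\theta h_\eps$, whereas the $Z$-equation evaluated on the manifold gives $\dot Z=\tfrac1\eps B h_\eps+G$. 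Equating the two expressions and evaluating at $t=0$ produces (\ref{pde}) at the point $(X_0,\theta_0)$; as this point is arbitrary, (\ref{pde}) holds identically. Here $h_\eps$ is understood as the globally defined object coming from the cut-off $F,G$, so no restriction $\|X\|<\alpha$ intervenes.

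For the second assertion I would insert $h_\eps=\sum_{k\ge 1}\eps^k h^k$ into the equivalent form $\partial_\theta h_\eps-B h_\eps=\eps\big(G(X,h_\eps,\theta)-\partial_X h_\eps\,F(X,h_\eps,\theta)\big)$ of (\ref{pde}) and match powers of $\eps$. Since $h_\eps=O(\eps)$, one may Taylor-expand $G$ and $F$ in their middle argument about $Z=0$; collecting the coefficient of $\eps^{n+1}$ produces the announced cascade $B(\theta)h^{n+1}-\partial_\theta h^{n+1}=J_n(X,\theta)$, with $J_0=-G(X,0,\cdot)$ and, for $n\ge1$, $J_n$ a universal polynomial in the already-constructed $h^1,\dots,h^n$ and in the derivatives $\partial_Z^{\,j}G,\partial_Z^{\,j}F$ with $j\le n$ (a Fa\`a di Bruno bookkeeping confirms the total differentiation order never exceeds $n$). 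Each level is a linear ODE in $\theta$ with $X$ as a parameter, which I would solve exactly as in the second step of Section~\ref{sec:2.2}: from $h^{n+1}(X,\theta)=R(\theta,0)h^{n+1}(X,0)-\int_0^\theta R(\theta,\varphi)J_n(X,\varphi)\,d\varphi$, periodicity forces $(\mathrm{Id}-R(T,0))h^{n+1}(X,0)=-\int_0^T R(T,\varphi)J_n(X,\varphi)\,d\varphi$, and since Lemma~\ref{expo} makes the monodromy $R(T,0)$ of spectral radius below $1$, the operator $\mathrm{Id}-R(T,0)$ is invertible and $h^{n+1}$ is uniquely determined, its smoothness following from that of $J_n$ by induction.

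It remains to produce the estimates. Substituting $\tilde h_\eps=\sum_{k=1}^r\eps^k h^k$ into (\ref{pde}), the matching above cancels the coefficients of $\eps^0,\dots,\eps^{r-1}$, so $\tilde h_\eps$ solves (\ref{pde}) up to a residual $\mathcal R[\tilde h_\eps]$ of size $O(\eps^{r-1})$, the remainder being controlled by the order-$r$ Taylor remainders of $F,G$ and hence by their assumed $\mathcal C^r$-bounds. To turn this into the bound (\ref{eq:norminf}) I would reuse the fixed-point machinery of Step~1 of Theorem~\ref{CenterManifold}: running the derivation of $\mathcal T$ but keeping the extra source $\mathcal R[\tilde h_\eps]$ along the characteristics yields an identity of the form $\tilde h_\eps-\mathcal T\tilde h_\eps=\eps\int_{-\infty}^0 R(\theta_0,\theta_0+u)\,\mathcal R[\tilde h_\eps]\,du$, so the exponential decay of the resolvent gives $\|\mathcal T\tilde h_\eps-\tilde h_\eps\|_\infty\le\frac{C\eps}{\mu}\|\mathcal R[\tilde h_\eps]\|_\infty=O(\eps^r)$. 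Since $\tilde h_\eps\in\mathcal F$ for small $\eps$ and $\mathcal T$ is a contraction of ratio $O(\eps)$ fixing $h_\eps$, the usual estimate $\|h_\eps-\tilde h_\eps\|_\infty\le(1-O(\eps))^{-1}\|\mathcal T\tilde h_\eps-\tilde h_\eps\|_\infty$ then delivers $\|h_\eps-\tilde h_\eps\|_\infty\le C_r\eps^r$.

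The hard part will be twofold. First, the combinatorial verification that $J_n$ depends only on derivatives of $F,G$ up to order $n$, together with the attendant loss of smoothness that caps the admissible $r$. Second, the clean passage from the PDE residual to the $L^\infty$ estimate, where the crucial gain of one power of $\eps$ — from an $O(\eps^{r-1})$ residual to an $O(\eps^r)$ error — comes precisely from the smoothing factor $\eps$ produced by integrating the exponentially decaying resolvent over $(-\infty,0]$.
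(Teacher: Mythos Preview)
Your proposal is correct and follows essentially the same route as the paper: derive (\ref{pde}) from invariance by differentiating $Z(t)=h_\eps(X(t),\theta(t))$ at $t=0$, build the cascade $B h^{n+1}-\partial_\theta h^{n+1}=J_n$ by matching powers and solve each level via the resolvent and the invertibility of $\mathrm{Id}-R(T,0)$, then upgrade the $O(\eps^{r-1})$ PDE residual to the $O(\eps^r)$ bound by integrating against the exponentially decaying resolvent. The only cosmetic difference is that you package the last step as a contraction estimate $\|h_\eps-\tilde h_\eps\|\le(1-\kappa)^{-1}\|\mathcal T\tilde h_\eps-\tilde h_\eps\|$, whereas the paper writes the Duhamel representation for $\Delta h=h_\eps-\tilde h_\eps$ along the characteristic and closes the resulting integral inequality directly; the two computations are the same.
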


\begin{proof} By construction, function $Z(t)=h_\ep(X(t),\theta(t))$ satisfies equation $(\ref{Var})$, i.e.\begin{align*}
\frac{\mathrm{d}h_\ep(X(t),\theta(t))}{\mathrm{dt}}&=\frac{1}{\eps}B(\theta(t)) h_\eps(X(t),\theta(t))+G(X,h_\ep(X(t),\theta(t)),\theta(t))\\
&= \partial_X h_\ep(X(t),\theta(t)) F\left(X(t),h_\eps(X,\theta(t)),\theta(t)\right)+\frac{1}{\varepsilon}\partial_\theta h_\ep(X(t),\theta(t))
\end{align*}
with $\theta(t)=\theta_0+t/\eps$. In particular, for $t=0$ we get equation (\ref{pde}) with $X=X_0$ and $\theta=\theta_0$ and since the initial values $X_0$ and $\theta_0$ are arbitrary, this proves the first statement. We now look for an expansion of $h_\eps$ in powers of $\eps$ of the form 
$$
h_\eps(X,\theta)=h^0(X,\theta)+\varepsilon h^1(X,\theta)+\dots+\varepsilon^n h^n(X,\theta)+ \cdots
$$
and thus insert previous expression into equation $(\ref{pde})$ to equate like powers of $\eps$. At order $\eps^{-1}$, this gives
$$
B(\theta) h^0 (X,\theta) = \partial_\theta h^0(X,\theta).
$$
This is an homogeneous linear differential equation in $\theta$, whose solution can be expressed as
$$
h^0(\cdot,\theta) = R(\theta,0) h^0(\cdot,0).
$$
The initial condition $h^0(\cdot,0)$ is a priori not prescribed. However, the only choice leading to a periodic solution $h^0$ is $h^0(\cdot,0)=0$, as is induced by the estimate $\|R(\theta,0)\| \leq C e^{-\mu \theta}$. Hence, $h^0 \equiv 0$.  We then proceed  to derive the equation satisfied by $h^1$, that is to say
$$
B(\theta) h^1 (X,\theta) + G(X,0,\theta) =  \partial_\theta h^1(X,\theta). 
$$
Similarly, the solution can be obtained easily 
$$
h^1(\cdot,\theta) = R(\theta,0) h^1(\cdot,0) + \int_0^\theta R(\theta,\varphi) G(X,0,\varphi)  d\mathrm{\varphi}
$$
For $h^1$ to be periodic with period $T$, the following relation should be satisfied
$$
(\mathrm{Id}-R(T,0)) h^1(\cdot,0) = \int_0^T R(T,\varphi) G(X,0,\varphi)  d\mathrm{\varphi}
$$
and since $\mathrm{Id}-R(T,0)$ is invertible, the only solution of the previous equation is given by 
$$
h^1(\cdot,0) = (\mathrm{Id}-R(T,0))^{-1} \int_0^T R(T,\varphi) G(X,0,\varphi)  d\mathrm{\varphi},
$$
leading to 
$$
h^1(\cdot,\theta) = R(\theta,0)(\mathrm{Id}-R(T,0))^{-1} \int_{\theta-T}^T R(0,\varphi) G(X,0,\varphi)  d\mathrm{\varphi}
$$
More generally, $h^{n+1}$ satisfies an equation of the form 
\begin{equation} 
B(\theta) h^{n+1}(X,\theta)-\partial_\theta h^{n+1}(X,\theta)=J_n(X,\theta)
\label{Eqhn}
\end{equation}
where $J_n$ contains various derivatives of $F$ and $G$ up to order $n$ and is periodic w.r.t. $\theta$.  The same arguments as above allow to conclude that it has a unique periodic solution
$$
h^{n+1}(\cdot,\theta) = R(\theta,0)(\mathrm{Id}-R(T,0))^{-1} \int_{\theta-T}^T R(0,\varphi) J_n(X,\varphi)  d\mathrm{\varphi},
$$
which, given the assumptions on $F$ and $G$, is bounded and has bounded derivatives w.r.t. $X$ up to order $r-n$. 
Consider now the truncated expansion $\tilde h_\eps =\eps h^1 + \ldots + \eps^r h^r$ of $h_\eps$ and denote 
$\Delta h_\eps = h_\eps - \tilde h_\eps$. Function $\tilde h_\eps$ satisfies the partial differential equation (\ref{pde}) 
\begin{align*}
\frac{1}{\eps}B(\theta) \tilde h_\eps(X,\theta)+G(X,\tilde h_\eps(X,\theta),\theta)=& \partial_X \tilde h_\eps(X,\theta) F\left(X,\tilde h_\eps(X,\theta),\theta\right)+\frac{1}{\eps}\partial_\theta h_\ep(X,\theta) \\
&\qquad+ \delta(X,\theta)
\label{Eqh}
\end{align*}
up to a defect $\delta(X,\theta)$ which is a continuous function from $\R^n \times \T$ into $\R^m$ and is bounded by construction by $\eps^{r-1}$. The solution $X_{h}(t,X_0,\theta_0)$ of equation (\ref{eq:CS}) thus satisfies
$$
\frac{d \tilde h(X_{h},\theta(t))}{dt} = \frac{1}{\eps} B(\theta(t)) \tilde h(X_{h},\theta(t)) + G(X_{h},\tilde h(X_{h},\theta(t)),\theta(t)) +\delta(X_{h},\theta(t))
$$ 
where we have omitted the arguments $(t,X_0,\theta_0)$ of $X_{h}$ for brevity. Proceeding as in Theorem $\ref{CenterManifold}$ (both $h$ and $\tilde h$ are bounded by construction), we then get 
$$
\Delta h (X_0,\theta_0) = \eps \int_{-\infty}^0 R(\theta_0,\theta_0+u) \Big(\Delta G(\eps u,X_0,\theta_0) +\delta(X_{h}(\eps u,X_0,\theta_0),\theta_0+u) \Big) d \mathrm{u}
$$
with
\begin{align*}
\Delta G(\eps u,X_0,\theta_0) &= G(X_h(\eps u,X_0,\theta_0),h(X_h(\eps u,X_0,\theta_0),\theta_0+u),\theta_0+u)\\
& -G(X_h(\eps u,X_0,\theta_0),\tilde h(X_h(\eps u,X_0,\theta_0),\theta_0+u),\theta_0+u).
\end{align*}
It follows that 
$$
\|\Delta h (X_0,\theta_0)\| \leq \eps C \int_{-\infty}^0 e^{\mu u} (L  \|\Delta h\|_{\infty} + K \eps^{r-1}) \mathrm{du}
$$
and the second statement follows. 
\end{proof}

\begin{theorem}[Shadowing principle for the truncated system]
\label{Errorhn}
Let $t^*$ and $X_0^\ep$ be as in Theorem \ref{Error} and define $X_{\tilde h}$ as the solution of differential system
\[\left\{\begin{array}{ll}
\frac{\mathrm{d} X_{\tilde h}}{\mathrm{dt}}&=F\left(X_{\tilde h},\tilde h_\eps\left(X_{\tilde h},\frac{t}{\eps}\right),\frac{t}{\varepsilon}\right)\\
X_{\tilde h}(0)&=X_0^\eps
\end{array}\right. \]
and $Z_{\tilde h}(t)=\tilde h_\eps\left(X_{\tilde h}(t),\frac{t}{\varepsilon}\right)$. Then, we have the following estimates:
$$
\forall t\in [0,t^*],\quad\|Z(t) -Z_{\tilde h}(t)\|+\|X(t) -X_{\tilde h}(t)\| \leq C \Big( \ep^{r+1}+ e^{-\hat\mu \frac{t}{\ep}}\Big),
$$
with $C>0$ and $\hat\mu>0$ constants independent of $t$ and $\eps$. Moreover, if the solution $X$ is bounded on $\R^+$, we can take $t^*=+\infty$ in the above estimates.
\end{theorem}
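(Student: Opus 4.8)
The plan is to regard $(X_{\tilde h},Z_{\tilde h})$ as an \emph{inexact} solution of the full system (\ref{Var3}) and to re-run, with one extra source term, the two-part estimate already established in Theorem~\ref{Error}. I would reuse two results as black boxes: the shadowing estimate of Theorem~\ref{Error} for the exact manifold $h_\eps$, and the approximation result of Theorem~\ref{Computeh}, which supplies both $\|h_\eps-\tilde h_\eps\|_\infty\leq C_r\eps^{r}$ and the fact that $\tilde h_\eps$ satisfies the partial differential equation (\ref{pde}) only up to a defect.

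\emph{First}, I would check that $(X_{\tilde h},Z_{\tilde h})$ solves (\ref{Var3}) up to a controlled defect. The slow equation $\dot X_{\tilde h}=F(X_{\tilde h},Z_{\tilde h},\theta)$ holds exactly by the very definition of $Z_{\tilde h}=\tilde h_\eps(X_{\tilde h},\cdot)$. Differentiating $Z_{\tilde h}$ in time and inserting the defect form of (\ref{pde}) furnished by Theorem~\ref{Computeh} yields
$$\dot Z_{\tilde h}=\frac{1}{\eps} B(\theta)Z_{\tilde h}+G(X_{\tilde h},Z_{\tilde h},\theta)-\delta(X_{\tilde h},\theta),$$
where the defect $\delta$ is of size $O(\eps^{r})$. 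Hence the fast equation is solved up to an $O(\eps^{r})$ perturbation, while the slow equation is solved exactly.

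\emph{Next}, I would estimate the two error components $e_Z=Z-Z_{\tilde h}$ and $e_X=X-X_{\tilde h}$. For the fast component, the Duhamel formula with the resolvent $R$ of Lemma~\ref{expo} gives
$$e_Z(t)=R\!\left(\theta_0+\frac{t}{\eps},\theta_0\right)e_Z(0)+\eps\int_{0}^{t/\eps}R\!\left(\theta_0+\frac{t}{\eps},\theta_0+u\right)\big(\Delta G(\eps u)+\delta(\eps u)\big)\,\mathrm{d}u,$$
with $\|\Delta G\|\leq L(\|e_X\|+\|e_Z\|)$. The exponential decay of $R$ turns the first term into an $O(e^{-\mu t/\eps})$ boundary layer and, decisively, bounds the contribution of the defect by $\eps\cdot O(\eps^{r})/\mu=O(\eps^{r+1})$; this is precisely where one power of $\eps$ is gained over the manifold error. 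For the slow component I would keep the altered datum $X_0^\eps$ of Theorem~\ref{Error}, integrate the equation for $e_X$ \emph{backward} from $t^{*}$, and feed into Gronwall's lemma the bound just obtained on $\|e_Z\|$ together with $\|h_\eps-\tilde h_\eps\|_\infty\leq C_r\eps^{r}$ and $\|\partial_X h_\eps\|_\infty\leq1$; a coupled Gronwall argument then closes both estimates.

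\textbf{Main obstacle.} The hard part is the uniformity of the $\eps^{r+1}$ term in $t^{*}$, hence the passage to $t^{*}=+\infty$. In Theorem~\ref{Error} the source driving the slow error decays like $e^{-\tilde\mu t/\eps}$, so the backward Gronwall amplification $e^{\beta(t^{*}-t)}$ acts only on negligibly small values of the source and the bound is uniform. Here, by contrast, the truncation defect $\delta$ is \emph{persistent}: it is present at every time and does not decay. A naive Gronwall therefore produces a factor $e^{\beta(t^{*}-t)}$ multiplying $\eps^{r+1}$, which deteriorates as $t^{*}\to\infty$. The argument must exploit that this defect acts on the slow variable only through the $\eps$-prefactored Duhamel integral, so that it never accumulates on the fast time scale, and, for the unbounded-time statement, the assumed boundedness of $X$ on $\R^{+}$; making this persistent feedback harmless uniformly in $t^{*}$ is the genuine difficulty of the theorem.
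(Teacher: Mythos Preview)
Your route is valid but more elaborate than the paper's. The paper dispatches the theorem in one sentence: ``The results follow directly from Theorem~\ref{Error} and from estimate~(\ref{eq:norminf}).'' The implicit argument is a triangle inequality through the exact-manifold solution $(X_h,Z_h)$ of Theorem~\ref{Error}: that theorem already gives $\|X-X_h\|+\|Z-Z_h\|\leq C e^{-\hat\mu t/\eps}$, and since $X_h$ and $X_{\tilde h}$ share the same initial datum $X_0^\eps$ while their vector fields differ pointwise by at most $L\|h_\eps-\tilde h_\eps\|_\infty\leq LC_r\eps^r$, a straightforward \emph{forward} Gronwall on $[0,t^*]$ bounds $\|X_h-X_{\tilde h}\|$, and then $\|Z_h-Z_{\tilde h}\|$ follows from $\|h_\eps-\tilde h_\eps\|_\infty$ plus the Lipschitz bound on $h_\eps$. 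There is no need to revisit the fast Duhamel estimate or the backward-integration trick: the boundary-layer part is already packaged in Theorem~\ref{Error}, whereas your approach re-derives it.

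What your approach buys is an explicit mechanism for gaining a power of $\eps$ (the defect passes through the $\eps$-prefactored Duhamel integral). Note, however, that Theorem~\ref{Computeh} states the defect $\delta$ is of size $\eps^{r-1}$, not $\eps^{r}$ as you write; together with the fact that the paper's triangle-inequality route naturally yields $\eps^{r}$ rather than $\eps^{r+1}$, this suggests the exponent in the stated bound may be optimistic by one, and neither argument as written quite reaches it. The concern you flag under ``Main obstacle'' --- that the persistent defect produces a Gronwall factor $e^{\beta t^*}$ multiplying the polynomial-in-$\eps$ term, threatening uniformity as $t^*\to\infty$ --- is entirely legitimate and is \emph{not} addressed by the paper's one-line proof either. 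You have correctly identified a gap; it is simply one the paper itself does not close.
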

\begin{proof}
The results follow directly from Theorem \ref{Error} and from estimate (\ref{eq:norminf}).
\end{proof}

\subsection{Derivation of the first terms of the expansion}
In this subsection, we derive the explicit expressions of the first terms of the expansion of $h_\eps$ previously obtained from the equation 
\begin{align*}
\partial_X h_\eps(X,\theta) F\left(X,h_\eps(X,\theta),\theta\right) +\frac{1}{\ep}\partial_\theta h_\eps(X,\theta) 
=\frac{1}{\eps}B(\theta) h_\ep(X,\theta)+ G\left(X,h_\eps(X,\theta),\theta\right).
\end{align*}
More precisely and assuming that $F$ and $G$ have bounded derivatives at least up to order $r=2$, we look for the truncated expansion
$h_{[2]}=h^0+\ep h^1+\ep^2 h^2$. We have already shown in the course of Theorem \ref{Computeh} that $h^0 \equiv 0$ and that $h^1$ is given by the equation 
\begin{eqnarray}
h^1(\cdot,\theta) =& R(\theta,0)(\mathrm{Id}-R(T,0))^{-1} \int_0^T R(T,\varphi) G(X,0,\varphi)  d\mathrm{\varphi}\\
&+\int_0^\theta R(\theta,\varphi) G(X,0,\varphi)  d\mathrm{\varphi}, \nonumber \\
=& R(\theta,0)(\mathrm{Id}-R(T,0))^{-1} \int_{\theta-T}^{\theta} R(0,\varphi) G(X,0,\varphi)  d\mathrm{\varphi}.
\label{Ordre0}
\end{eqnarray}
Now, the equation at order $1$ in $\ep$ gives
\begin{equation*}
\partial_\theta h^2=B(\theta) h^2+\partial_Z G(X,0,\theta) \cdot h^1 - \partial_X h^1 \cdot F(X,0,\theta)
\end{equation*}
so that 
$$
h^2(\cdot,\theta) = R(\theta,0)(\mathrm{Id}-R(T,0))^{-1} \int_{\theta-T}^{\theta} R(0,\varphi) {\cal RHS}(X,\varphi) d\mathrm{\varphi}
$$
where we have denoted
$$
{\cal RHS}(X,\theta) = \partial_Z G(X,0,\theta) \cdot h^1 - \partial_X h^1 \cdot F(X,0,\theta).
$$

\begin{remark}
For the sake of illustration, we give the first orders reduction of the differential system $(\ref{Var3})$.
\begin{itemize}
\item The $0$ order reduction of the differential system $(\ref{Var3})$ is:
\begin{equation*}\left\{\begin{array}{ll}
 \dot{X}(t)&=F(X,0,\theta)\\
Z(t)&=0
\end{array}\right.
\end{equation*}
\item The first order reduction of our system is given by the following equations:
\begin{equation*}\left\{\begin{array}{ll}
 \dot{X}(t)&=F\left(X,0,\frac{t}{\ep}\right)+\ep \partial_z F\left(X,0,\frac{t}{\ep}\right) h^1\left(X,\frac{t}{\ep}\right)\\
&=F\left(X,0,\frac{t}{\ep}\right)\\
&+\ep \partial_z F\left(X,0,\frac{t}{\ep}\right) R\left(\frac{t}{\ep},0\right)\left(\mathrm{Id}-R(T,0)\right)^{-1} \int_{\frac{t}{\ep}-T}^{\frac{t}{\ep}} R(0,\varphi) G(X,0,\varphi)  d\mathrm{\varphi} \\
Z(t)&=R\left(\frac{t}{\ep},0\right)\left(\mathrm{Id}-R(T,0)\right)^{-1} \int_{\frac{t}{\ep}-T}^{\frac{t}{\ep}} R(0,\varphi) G(X,0,\varphi)  d\mathrm{\varphi}
\end{array}\right.
\end{equation*}
\end{itemize}

\end{remark}

\begin{remark} 
In terms of our initial system, we have the following first order reduction of the differential system $(\ref{Main3})$.
\begin{itemize}
\item The $0^{\text{th}}$ order reduction of the differential system $(\ref{Main3})$ is:
\begin{equation}\left\{\begin{array}{ll}
 \dot{x}_p&=A_p^0 x_p(t) -  B_p^0 x_p(t) x_q(t)\\
\dot{x}_q&=-A_q^0 x_q(t) + B_q^0 x_p(t) x_q(t) \\
y_p(t)&=\left( p_{eq}(\theta)+I_p(\theta)\right) x_p(t)\\
y_q(t)&=\left( q_{eq}(\theta)+I_q(\theta)\right) x_q(t)
\end{array}\right.
\end{equation}
with the following coefficients:
\begin{align*}
A_p^0&=\sum\limits_{i=1}^N a_{p,i}\left( p_{eq,i}(\theta)-I_{p,i}(\theta)\right)\\
B_p^0&=\sum\limits_{i=1}^N b_{p,i}\left( p_{eq,i}(\theta)-I_{p,i}(\theta)\right)\left( q_{eq,i}(\theta)-I_{q,i}(\theta)\right)\\
A_q^0&=\sum\limits_{i=1}^N a_{q,i}\left( q_{eq,i}(\theta)-I_{q,i}(\theta)\right)\\
B_q^0&=\sum\limits_{i=1}^N b_{q,i}\left( p_{eq,i}(\theta)-I_{p,i}(\theta)\right)\left( q_{eq,i}(\theta)-I_{q,i}(\theta)\right)
\end{align*}
This system is still in a Lotka-Volterra form, but its coefficients are some averaging (in $i$, the different sites) of the previous coefficients.

\item The first order reduction of our system is given by the following equations:
\begin{equation}\left\{\begin{array}{ll}
\dot{x}_p&=\left( A_p^0+\ep A_1^p\right) x_p(t) - \left( B_p^0+\ep B_p^1\right) x_p(t) x_q(t) -\ep C_p^1 x_p(t)^2 x_q(t) \\
&\qquad-\ep D_p^1 x_p(t) x_q(t)^2\\
\dot{x}_q&=\left( A_q^0+\ep A_1^q\right) x_q(t) - \left( B_q^0+\ep B_q^1\right) x_p(t) x_q(t) -\ep C_q^1 x_q(t)^2 x_p(t) \\
&\qquad-\ep D_q^1 x_q(t) x_p(t)^2\\
y_p(t)&=-I_p(\theta)x_p(t) - \ep \tilde{K}_p(\theta)^{-1}\left[ x_p(t) \Pi_p\left(a_p p_{eq}\right)-x_p(t) x_q(t) \Pi_p\left(b_p p_{eq}q_{eq}\right)\right]\\
y_q(t)&=-I_q(\theta)x_q(t) - \ep \tilde{K}_q(\theta)^{-1}\left[ x_q(t) \Pi_q\left(a_q q_{eq}\right)-x_p(t) x_q(t) \Pi_q\left(b_q p_{eq}q_{eq}\right)\right]
\end{array}\right.
\end{equation}
with the following coefficients:
\begin{align*}
A_p^1&=-\sum a_{p} \tilde{K}_p(\theta)^{-1} \Pi_p\left(a_p p_{eq}\right)\\
B_p^1&=-\sum b_{p} \left( p_{eq}(\theta)-I_{p}(\theta)\right) \tilde{K}_q(\theta)^{-1} \Pi_q\left(a_q q_{eq}\right)\\
&-\sum b_{q} \left( q_{eq}(\theta)-I_{q}(\theta)\right) \tilde{K}_p(\theta)^{-1} \Pi_p\left(a_p p_{eq}\right)- \sum a_p \tilde{K}_p(\theta)^{-1} \Pi_p\left(b_p p_{eq}q_{eq}\right)\\
C_p^1&=\sum b_p \left( p_{eq}(\theta)-I_{p}(\theta)\right) \tilde{K}_q(\theta)^{-1} \Pi_q\left(b_q p_{eq}q_{eq}\right) \\
D_p^1&=\sum b_p \left( q_{eq}(\theta)-I_{q}(\theta)\right) \tilde{K}_p(\theta)^{-1} \Pi_p\left(b_p p_{eq}q_{eq}\right)
\end{align*}
and the same notations for the coefficients in $q$, where we have introduced $\Pi_p$ the projection on $\mathcal{E}_0$ parallel to the direction $p_{eq}(\theta)$, and $\Pi_q$ the projection on $\mathcal{E}_0$ parallel to the direction $q_{eq}(\theta)$,.
\end{itemize}
\end{remark}

\begin{remark} $ $ \\
Here, we have written the explicit differential system for an approximation of the exact solution of $(\ref{Main3})$ up to order $\mathcal{O}\left( e^{-\hat\mu \frac{t}{\ep}} +\ep \right)$ for a constant $\hat\mu>0$.
\end{remark}

\section{Averaging} \label{sect:averaging}
\subsection{The averaging theorem}

Using the center manifold theorem, we have enventually reduced the original equation to a differential system of the form
\begin{equation}\left\{\begin{array}{ll} \dot{X}_h&=F\left(X_h,h_\ep\left(X_h,\theta\right),\theta\right)\\
\dot{\theta}&=\frac{1}{\varepsilon}
\label{SystRed}
\end{array}\right.
\end{equation}
and $Z_h(t)=h_\ep\left(X_h(t),\theta(t)\right)$. The function in the right-hand side of (\ref{SystRed}) can be expanded into powers of $\eps$ as follows
\begin{eqnarray*}
F(X,h_\eps(X,\theta),\theta) &=& F(X,0,\theta) + \eps \partial_Z F(X,0,\theta) h^1(X,\theta) + \eps^2 \partial_Z F(X,0,\theta) h^2(X,\theta)  \\
&+& \frac{\eps^2}{2} \partial_Z^2 F(X,0,\theta) (h^1(X,\theta),h^1(X,\theta)) + \ldots \\
&=& F_0(X,\theta) + \eps F_1(X,\theta) + \ldots + \eps^{r-1} F_{r-1}(X,\theta) + {\cal O}(\eps^{r}).
\end{eqnarray*}
The equation for $X$ being highly-oscillatory, it can be averaged according to the 
 following theorem.

\begin{theorem} \label{RefMoy2} For all $T_f >0$, there exists $\eps_0 >0$ such that for all $\eps <\eps_0$, there exists a change of variables $\tilde{\Phi}_\theta^{\eps}={\rm Id} +{\cal O}(\eps)$ and a function $\tilde{F}^\varepsilon$ defined on $\R^n$ satisfying the relation
\begin{eqnarray}
\forall t\in\left[0,T_f\right],\quad \| X \left(t\right)-\tilde{\Phi}_{\frac{t}{\eps}}^{\eps}\circ \tilde{\Psi}^\eps_{t}(x_0)\| \leq C \eps^{r}
\end{eqnarray}
where $\tilde{\Psi}^\eps_{t}$ is the flow of the differential equation with {\em autonomous} vector field $\tilde F^\eps$. 
\end{theorem}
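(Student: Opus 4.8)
The plan is to treat Theorem \ref{RefMoy2} as an instance of high-order periodic averaging. First I would pass to the slow variable $\theta=t/\eps$ as independent variable: since $\dot\theta=1/\eps$, the first equation of (\ref{SystRed}) becomes
\begin{equation*}
\frac{\mathrm{d}X}{\mathrm{d}\theta} = \eps\, \hat F(X,\theta), \qquad \hat F(X,\theta):= F\left(X,h_\eps(X,\theta),\theta\right),
\end{equation*}
where $\hat F$ is $T$-periodic in $\theta$ (with $T=2\pi$) and, by the expansion recalled just before the theorem together with Theorem \ref{Computeh}, admits the expansion $\hat F = F_0 + \eps F_1 + \cdots + \eps^{r-1}F_{r-1} + {\cal O}(\eps^r)$ in which every $F_j$ is $T$-periodic in $\theta$ and has bounded $X$-derivatives. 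This is precisely the form in which the small parameter multiplies a periodic vector field, so that averaging over the fast angle $\theta$ is the natural tool.

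Second, I would look for a near-identity, $T$-periodic change of variables
\begin{equation*}
X = \tilde\Phi_\theta^\eps(Y) = Y + \eps\,\Phi_1(Y,\theta) + \cdots + \eps^{r}\,\Phi_{r}(Y,\theta),
\end{equation*}
with each $\Phi_k(\cdot,\theta)$ being $T$-periodic and of zero average in $\theta$, chosen so that in the variable $Y$ the equation becomes autonomous up to a high-order remainder,
\begin{equation*}
\frac{\mathrm{d}Y}{\mathrm{d}\theta} = \eps\,\big(G_1(Y) + \eps G_2(Y) + \cdots + \eps^{r-1}G_{r}(Y)\big) + {\cal O}(\eps^{r+1}).
\end{equation*}
Differentiating the change of variables and matching like powers of $\eps$ produces at each order $k$ a homological equation of the schematic form $G_k(Y) + \partial_\theta\Phi_k(Y,\theta) = \Xi_k(Y,\theta)$, where $\Xi_k$ is a $T$-periodic function already determined by $F_0,\ldots,F_{k-1}$ and by $\Phi_1,\ldots,\Phi_{k-1}$, $G_1,\ldots,G_{k-1}$ (at lowest order $\Xi_1 = F_0$). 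Since $\partial_\theta$ is invertible on the space of zero-mean $T$-periodic functions, each equation is solved by taking $G_k(Y) = \frac{1}{T}\int_0^T \Xi_k(Y,\theta)\,\mathrm{d}\theta$ and defining $\Phi_k$ as the unique zero-mean periodic primitive of $\Xi_k - G_k$. This determines $\tilde\Phi_\theta^\eps$ and, in the original time, the autonomous field $\tilde F^\eps = G_1 + \eps G_2 + \cdots + \eps^{r-1}G_{r}$ with flow $\tilde\Psi_t^\eps$.

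Third, I would close the argument by an error estimate. For $\eps$ small the map $\tilde\Phi_\theta^\eps = \mathrm{Id} + {\cal O}(\eps)$ is a diffeomorphism, uniformly in $\theta$, by a Neumann-series/contraction argument, so the transformation is licit and $Y$ can be initialized through the inverse of $\tilde\Phi_{\theta_0}^\eps$. The truncation leaves a defect of size $\eps^{r+1}$ per unit of slow time $\theta$; comparing, via Duhamel and the Gronwall lemma, the exact transformed solution with the flow of the purely autonomous truncated field over $\theta\in[0,T_f/\eps]$, the accumulated defect is of size $\eps^{r+1}\cdot(T_f/\eps) = {\cal O}(\eps^{r})$, while the Lipschitz drift over that interval contributes only a bounded factor $e^{L T_f}$ independent of $\eps$. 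Undoing the change of variables yields $\|X(t) - \tilde\Phi_{t/\eps}^\eps\circ\tilde\Psi_t^\eps(x_0)\| \leq C\eps^r$ on $[0,T_f]$.

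The main obstacle I expect is not the algebra of the homological equations, which is routine once one observes that $\partial_\theta$ is invertible on zero-mean periodic functions, but the uniform control of all constructed objects and of the remainder. One must check that the recursively defined $\Phi_k$ and $G_k$ inherit enough bounded $X$-derivatives, keeping in mind the loss of derivatives already present in Theorem \ref{Computeh} (where $h^n$ has bounded derivatives only up to order $r-n$); this forces one to push the expansion exactly to the order permitted by the regularity of $F$ and $G$. The second delicate point is to keep every constant, and in particular the Gronwall factor, independent of $\eps$ while integrating the ${\cal O}(\eps^{r+1})$ defect over the long interval of length $T_f/\eps$, which is precisely what converts the local high-order defect into the global ${\cal O}(\eps^r)$ bound claimed in the statement.
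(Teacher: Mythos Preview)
Your sketch is the standard construction for high-order periodic averaging and is essentially correct. Note, however, that the paper does not actually prove Theorem \ref{RefMoy2}: it states the result and immediately follows it with a remark giving the first two terms of $\tilde F^\eps$ with a citation to \cite{Chartier}; the theorem is treated as a known result from the averaging literature rather than something proved within the paper. Your outline (rescale to $\theta$, near-identity periodic transformation, homological equations solved by splitting mean and zero-mean parts, Gronwall over the long interval $[0,T_f/\eps]$) is exactly the classical argument that underlies such results, and the regularity caveat you flag about the loss of derivatives from Theorem \ref{Computeh} is the right thing to worry about when matching the order $r$ in the statement.
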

\begin{remark} The first terms of $\tilde F^\eps = \tilde F_0 + \eps \tilde F_1 +\ldots$ are given by the formulas \cite{Chartier}
$$
\tilde F_0(X) = \frac{1}{T} \int_0^T F_0(X,\theta) d\theta = \frac{1}{T} \int_0^T F(X,0,\theta) d\theta
$$
and 
$$
\tilde F_1(X) =  \frac{1}{T} \int_0^T F_1(X,\theta) d\theta - \frac{1}{2T} \int_0^T \int_0^\theta [F_0(X,s),F_0(X,\theta)] ds d\theta
$$
where the Lie-bracket  stands for 
$$
[F_0(X,s),F_0(X,\theta)]:= \partial_X F_0(X,s) F_0(X,\theta) - \partial_X F_0(X,\theta) F_0(X,s).
$$
\end{remark}

Hence, the approximated differential system is:
\begin{itemize}
\item Up to order $0$ in $\ep$:
\begin{equation*}\left\{\begin{array}{ll} \dot{X}_h&=\tilde F_0(X) \\
\dot{\theta}&=\frac{1}{\varepsilon}
\end{array}\right.
\end{equation*}
\item Up to order $1$ in $\ep$:
\begin{equation*}\left\{\begin{array}{ll} \dot{X}_h&=\tilde F_0(X) +\ep \tilde F_1(X) \\
\dot{\theta}&=\frac{1}{\varepsilon}
\end{array}\right.
\end{equation*}
\end{itemize}

\subsection{Application to our system}
 
Now, we want to solve the equation which is verified by our new variable $X$ on the center manifold. The equation is the following:
\begin{equation}\left\{\begin{array}{ll} \dot{X}&=\begin{pmatrix}f^x_2\left(X,h(X,\theta),\theta\right)\\g^x_2\left(X,h(X,\theta),\theta\right)\end{pmatrix}\\
\dot{\theta}&=\frac{1}{\varepsilon}
\label{SysVar2}
\end{array}\right.
\end{equation}

%

To approximate the solution up to order 1 in $\varepsilon$, we make an averaging in $\theta$ for the functions. Theorem $\ref{RefMoy2}$ allows us to assert that this solution and the solution of $(\ref{SysVar2})$ are close to within $\mathcal{O}(\varepsilon)$.\\

We then denote:
\begin{align*}
\tilde F_0(X) &=\frac{1}{T}\int_0^T \begin{pmatrix}f^x_2\left(X,0,\theta\right)\\g^x_2\left(X,0,\theta\right)\end{pmatrix}\mathrm{d\theta} \\
\tilde F_1(X) &=\frac{1}{T}\int_0^T \begin{pmatrix}\partial_z f^x_2\left(X,0,\theta\right)h_p^1(X,\theta)\\ \partial_z g^x_2\left(X,0,\theta\right)h_q^1(X,\theta)\end{pmatrix}\mathrm{d\theta}\\ &\qquad\qquad-\frac{1}{2T}\int_0^T\int_0^\theta \left[\begin{pmatrix}f^x_2\left(X,0,s\right)\\g^x_2\left(X,0,s\right)\end{pmatrix}  ,\begin{pmatrix}f^x_2\left(X,0,\theta\right)\\g^x_2\left(X,0,\theta\right)\end{pmatrix}   \right]\mathrm{ds}\mathrm{d\theta}  
\end{align*}

Finally, we obtain the approximate system:
\begin{equation}\left\{\begin{array}{ll} \dot{X}&=\tilde F_0(X)+\ep\tilde F_1(X)\\
Z(t)&=\ep h^1\left(X(t),\theta(t)\right)
\label{SysVarMoy}
\end{array}\right.
\end{equation}

%

\begin{proposition} $ $ \\
 To express clearly the dependence in $x_p$ and $x_q$ of our differential system, we introduce the following notations:
\[h^0(X,\theta)=\begin{pmatrix} h_p^0(\theta) x_p(t) \\ h_q^0(\theta) x_q(t)  \end{pmatrix},\quad h^1(X,\theta)=\begin{pmatrix} h_p^1(\theta) x_p(t)- h_p^2(\theta) x_p(t) x_q(t) \\ h_q^1(\theta) x_q(t)- h_q^2(\theta) x_p(t) x_q(t)  \end{pmatrix}\] 
and we remind the notation: $\theta= \frac{t}{\ep}$.

\begin{itemize}
\item Up to order $0$ in $\ep$, the differential system is the following: 
\begin{equation}\left\{\begin{array}{ll}
 \dot{x_p}&=x_p(t) \left[\sum a_{p}\alpha_p^0 \right]- x_p(t) x_q(t) \left[\sum b_{p} \alpha_p^1\right]\vspace{2mm}\\
\dot{x_q}&=-x_q(t) \left[\sum a_{q}\alpha_q^0 \right]+ x_p(t) x_q(t) \left[\sum b_{q}\alpha_q^1\right]\vspace{2mm}\\
y_p(t)&=h_p^0(\theta) x_p(t)\\
y_q(t)&=h_q^0(\theta)x_q(t)
\end{array}\right.
\end{equation}
with 
\begin{align*}
\alpha_p^0 &= \frac{1}{T}\int_0^T\left(p_{eq}(\theta)+h_p^0(\theta) \right)\mathrm{d\theta}\\
\alpha_p^1 &= \frac{1}{T}\int_0^T\left(p_{eq}(\theta)+h_p^0(\theta) \right)\left(q_{eq}(\theta)+h_q^0(\theta) \right)\mathrm{d\theta}
\end{align*}
and the equivalent definitions for $\alpha_q^0$ and $\alpha_q^1$.\\

The solution $\tilde{x}_p$ and $\tilde{x}_q$ of this system are such that :
\begin{equation*}
\forall t\in\R,~\|\tilde{x}_p(t) - x_p(t)\|\leq C\ep, ~\|\tilde{x}_q(t) - x_q(t)\|\leq C\ep
\end{equation*}

\item Up to order $1$ in $\ep$, the differential system becomes: 
\begin{equation}\left\{\begin{array}{ll}
 \dot{x_p}&=x_p(t) \left[\sum a_{p} \alpha_p^0+\ep \sum a_{p} \beta_p^0
\right]\vspace{2mm}\\
&- x_p(t) x_q(t) \left[\sum b_{p} \alpha_p^1+\ep\sum b_{p} \beta_p^1+\ep\sum a_p \beta_p^2\right]\vspace{2mm}\\
& -\ep x_p (t) x_q(t)^2 \sum b_p \beta_p^3-\ep x_p (t)^2 x_q(t) \sum b_p \beta_p^4\vspace{2mm}\\
%
\dot{x_q}&=x_q(t) \left[\sum a_{q} \alpha_q^0+\ep \sum a_{q} \beta_q^0
\right]\vspace{2mm}\\
&- x_p(t) x_q(t) \left[\sum b_{q} \alpha_q^1+\ep\sum b_{q} \beta_q^1+\ep\sum a_q \beta_q^2\right]\vspace{2mm}\\
& -\ep x_p (t) x_q(t)^2 \sum b_q \beta_q^3-\ep x_p (t)^2 x_q(t) \sum b_q \beta_q^4\\
y_p(t)&=h_p^0(\theta) x_p(t)+\ep\left(h_p^1(\theta) x_p(t)-h_p^2(\theta) x_p(t)x_q(t)\right)\\
y_q(t)&=h_q^0(\theta) x_q(t)+\ep\left(h_q^1(\theta) x_q(t)-h_q^2(\theta) x_p(t)x_q(t)\right)
\end{array}\right.
\label{SystN2}
\end{equation}
with
\begin{align*}
\beta_p^0 &= \frac{1}{T}\int_0^T h_p^1(\theta) \mathrm{d\theta}-
\frac{1}{2T}\int_0^T\int_0^\theta \left[p_{eq}(s)+ h_p^0(s) ,p_{eq}(\theta)+ h_p^0(\theta) \right]\mathrm{ds}\mathrm{d\theta}   \\
\beta_p^1 &= \frac{1}{T}\int_0^T h_p^1(\theta) \left(q_{eq}(\theta)+h_q^0(\theta) \right)\mathrm{d\theta} + \frac{1}{T}\int_0^T h_q^1(\theta) \left(p_{eq}(\theta)+
h_p^0(\theta) \right)\mathrm{d\theta} \\&\qquad-\frac{1}{2T}\int_0^T\int_0^\theta \left[p_{eq}(s)+ h_p^0(s) ,p_{eq}(\theta)+ h_p^0(\theta) \right]\mathrm{ds}\mathrm{d\theta} \\
\beta_p^2&= \frac{1}{T}\int_0^T h_p^2(\theta) \mathrm{d\theta}\\
\beta_p^3&=\frac{1}{T}\int_0^T h_p^2(\theta) \left(q_{eq}(\theta)+h_q^0(\theta) \right) \mathrm{d\theta}\\
\beta_p^4&=\frac{1}{T}\int_0^T h_q^2(\theta) \left(p_{eq}(\theta)+h_p^0(\theta) \right) \mathrm{d\theta}\\
\end{align*}
and the corresponding notations for $q$.

The solution $\tilde{x}_p$ and $\tilde{x}_q$ of this system are such that :
\begin{align*}
\forall t\in\R,~\|\tilde{x}_p(t) - x_p(t)\|\leq C\ep, &~\|\tilde{x}_q(t) - x_q(t)\|\leq C\ep\\
\text{and for}~ t=k T, ~k\in\N, &~\|\tilde{x}_p(t) - x_p(t)\|\leq C\ep^2, ~\|\tilde{x}_q(t) - x_q(t)\|\leq C\ep^2
\end{align*}

\end{itemize}
\end{proposition}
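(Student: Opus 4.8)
The plan is to chain together the two reduction results already established. First I would invoke Theorem~\ref{Errorhn} (shadowing for the truncated system, taken with $r=2$): it guarantees that the components $x_p,x_q$ of the exact solution of~(\ref{Main3}) differ from the solution $X_{\tilde h}$ of the reduced system $\dot X_{\tilde h}=F(X_{\tilde h},\tilde h_\eps(X_{\tilde h},\theta),\theta)$ living on the center manifold by $\mathcal{O}(\eps^{3}+e^{-\hat\mu t/\eps})$. Since the right-hand side $F(X,\tilde h_\eps(X,\theta),\theta)$ is highly oscillatory and $T$-periodic in $\theta=t/\eps$, I would then apply the averaging Theorem~\ref{RefMoy2}, which replaces it by the autonomous field $\tilde F^\eps=\tilde F_0+\eps\tilde F_1+\mathcal{O}(\eps^2)$ together with a near-identity change of variables $\tilde\Phi_\theta^\eps=\mathrm{Id}+\mathcal{O}(\eps)$. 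The solutions $\tilde x_p,\tilde x_q$ appearing in the statement are precisely the components of the flow $\tilde\Psi^\eps_t$ of this averaged field. With this architecture in place, the whole content of the proposition reduces to the explicit evaluation of $\tilde F_0$ and $\tilde F_1$ using the Lotka--Volterra form of $f,g$ and the closed-form expressions for $h^1,h^2$ obtained in Section~\ref{sect:manifold}.

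For the order-$0$ system I would simply compute $\tilde F_0(X)=\frac{1}{T}\int_0^T F(X,0,\theta)\,d\theta$. Recalling $F=(f^x_2,g^x_2)^T$ and that $f^x_2(X,0,\theta)=\mathbf{1}\cdot f(x_p(p_{eq}(\theta)+h_p^0(\theta)),\,x_q(q_{eq}(\theta)+h_q^0(\theta)))$, the Lotka--Volterra structure $f_i=a_{p,i}p_i-b_{p,i}p_iq_i$ makes $f^x_2(X,0,\theta)$ the sum of a term linear in $x_p$, carrying the factor $\sum_i a_{p,i}(p_{eq,i}+h^0_{p,i})$, and a term in $x_px_q$, carrying $\sum_i b_{p,i}(p_{eq,i}+h^0_{p,i})(q_{eq,i}+h^0_{q,i})$. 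Averaging these $\theta$-dependent factors over one period yields exactly $\alpha_p^0$ and $\alpha_p^1$, and symmetrically for $q$. This confirms that the reduced dynamics is again of Lotka--Volterra type with space-and-time averaged coefficients, and the uniform $\mathcal{O}(\eps)$ bound is the standard first-order averaging estimate combined with the negligible shadowing defect.

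The order-$1$ system is where the real work lies, and I expect it to be the main obstacle. Here I would substitute into the formula for $\tilde F_1$ from the remark following Theorem~\ref{RefMoy2}, tracking two distinct contributions. The first, $\frac{1}{T}\int_0^T\partial_Z F(X,0,\theta)\,h^1(X,\theta)\,d\theta$, requires the factorized form $h^1=(h_p^1(\theta)x_p-h_p^2(\theta)x_px_q,\ldots)^T$; this factorization itself must be verified by noting that $G(X,0,\theta)$ inherits the linear-plus-bilinear structure of the nonlinearity, so that the resolvent formula for $h^1$ from Theorem~\ref{Computeh} produces exactly one term proportional to $x_p$ and one to $x_px_q$. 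Multiplying the operator $\partial_Z F$, itself affine in $X$, by $h^1$ then generates monomials up to total degree three in $(x_p,x_q)$, which I would sort into the coefficients $\beta_p^0,\beta_p^1,\beta_p^2,\beta_p^3,\beta_p^4$. The second contribution is the iterated Lie-bracket integral $-\frac{1}{2T}\int_0^T\int_0^\theta[F_0(X,s),F_0(X,\theta)]\,ds\,d\theta$; since $F_0=F(X,0,\cdot)$ is quadratic in $X$, each bracket is again cubic in $X$ and feeds into the same coefficients. The delicate point is the purely organizational task of carrying out this bilinear expansion without error and confirming that no monomials of degree other than those listed ($x_p$, $x_px_q$, $x_p^2x_q$, $x_px_q^2$) survive, which is guaranteed a priori by the degree count but whose coefficient-by-coefficient matching is the bookkeeping heart of the proof.

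Finally, for the error estimates I would combine the two sources of error. Theorem~\ref{Errorhn} contributes $\mathcal{O}(\eps^{r+1}+e^{-\hat\mu t/\eps})$, negligible relative to the averaging error for $r=2$, while Theorem~\ref{RefMoy2} with $r=2$ gives $\|X(t)-\tilde\Phi^\eps_{t/\eps}\circ\tilde\Psi^\eps_t(x_0)\|\leq C\eps^2$ on bounded intervals, extended to $\R^+$ when $X$ stays bounded by exploiting the exponential forgetting of the initial layer. Since $\tilde\Phi^\eps_\theta=\mathrm{Id}+\mathcal{O}(\eps)$, composing yields the uniform bound $\|\tilde x-X\|\leq C\eps$ for all $t$. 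The sharper $\mathcal{O}(\eps^2)$ estimate at the stroboscopic instants, those $t$ for which $\theta=t/\eps$ is a multiple of the period $T$, follows because at those instants the periodic correction $\tilde\Phi^\eps-\mathrm{Id}$ vanishes to order $\eps^2$, so that the full second-order accuracy of $\tilde\Psi^\eps$ is no longer masked by the $\mathcal{O}(\eps)$ oscillatory part. For the order-$0$ system the identical argument with $r=1$ produces the uniform $\mathcal{O}(\eps)$ bound.
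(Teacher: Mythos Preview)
Your proposal is correct and follows exactly the route the paper intends: the proposition is not given a separate proof in the paper but is presented as the outcome of plugging the Lotka--Volterra nonlinearity into the formulas for $\tilde F_0$ and $\tilde F_1$ displayed just before it, combined with the error bounds from Theorems~\ref{Errorhn} and~\ref{RefMoy2}. Your outline simply makes explicit the computations and the chaining of estimates that the paper leaves implicit, including the verification that $h^1$ factorizes as stated and the degree-counting that limits the monomials to $x_p$, $x_px_q$, $x_p^2x_q$, $x_px_q^2$.

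One small caveat: your justification of the sharper $\mathcal{O}(\eps^2)$ bound at $t=kT$ relies on $\tilde\Phi^\eps_\theta-\mathrm{Id}$ vanishing to higher order at those instants, but note that $\theta=t/\eps$, so the stroboscopic times in the $\theta$-variable are $t=kT\eps$, not $t=kT$. The paper's statement ``for $t=kT$'' is most likely a slip for the stroboscopic instants of the fast variable; your argument is correct for those, and you should flag or silently correct the discrepancy rather than trying to prove the claim as literally written.
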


To go to further order in $\ep$, we need to use Theorem $\ref{RefMoy2}$ and to evaluate the functions $\tilde{\Phi}^\varepsilon_t$ et $\tilde{F}^\varepsilon$ related to our problem. We solve:
\begin{equation}
\dot{\gamma}^\varepsilon=\tilde{F}^\varepsilon(\gamma^\varepsilon)
\end{equation}
If we denote $\tilde{\Psi}^\varepsilon_\theta(X_0)$ the flow related to this equation, we define then:
\[\tilde{X}(\theta)=\tilde{\Phi}^\varepsilon_\theta\circ\tilde{\Psi}^\varepsilon_\theta(X_0).\]

We can now assert that $X(\theta)$ and $\tilde{X}(\theta)$ are different up to $\mathcal{O}(e^{-\frac{C}{\varepsilon}})$. We get a correct approximation of our solution $X$. We still need to compute the related $Y$ vector, and then to perform the inverse changes of variables, to get an approximation up to $\mathcal{O}(e^{-\frac{C}{\varepsilon}}+e^{-\hat\mu t})$ of our first unknowns $p$ and $q$. If we use the approximation $h_{[n]}$ of the center manifold $h_\ep$, the solutions are close to within $\mathcal{O}(e^{-\frac{C}{\varepsilon}}+e^{-\hat\mu \frac{t}{\ep}}+\ep^{n+1})$.\\

We want to show that our method is an inprovement of the naive method, where we average in $\theta$ before we determine the center manifold. In other words, we call the naive method the one in which we average the equations before any study of the system. To do so, we study the stability of our system. We consider a situation in which $N=2$. According to the article \cite{Castella}, our differential system $(\ref{SystN2})$ has a stable (respectively unstable) equilibrium if:
\begin{align*}  \sigma&=\frac{\ep}{T}\left( \sum b_p\int_0^Th_q^2(\theta)\left(p_{eq}(\theta)+h_p^0(\theta) \right)\mathrm{d\theta}  \right. \\
&\qquad\qquad\left. +\sum b_q \int_0^Th_p^2(\theta)\left(q_{eq}(\theta)+h_q^0(\theta) \right)\mathrm{d\theta} \right) <0 ~(\text{resp.}~ >0).\end{align*}
We want to prove that the actual $\sigma$ can have a sign, and the $\sigma_0$ related to the naive method the other sign. Hence, the study of the stability gives different results with the naive method than with the real $\sigma$.\\

We choose:
\[ \tilde{K}_p=-1,~  \tilde{K}_q=-1,~p_{eq}(\theta)=\begin{pmatrix}1-a(\theta)\\a(\theta)\end{pmatrix},~q_{eq}=\begin{pmatrix}1-b\\b\end{pmatrix}\]
with $a(\theta)=a_0+a_1 \cos(\theta)+ a_{-1}\sin(\theta)$. The computation of $\sigma$ permits us to find suitable values of $a_0,a_1,a_{-1}$ and $b$.
Indeed, for $b_p=\begin{pmatrix}0.2\\0.1\end{pmatrix}$, $b_q=\begin{pmatrix}0.5\\0.3\end{pmatrix}$, $\ep=0.1$, if we choose $a_0=6$, $a_1=3$, $a_{-1}=2.5$ and $b=0.06$, we find a naive sigma $\sigma_0=-0.0122$, whereas the real $\sigma$ is equal to $0.0228$.

\section{An example with $N=2$}

We apply this method numerically on a simple example with two sites. Our equation is the following:
\begin{equation}\left\{\begin{array}{ll} 
\frac{\mathrm{d}p}{\mathrm{dt}}&=\frac{1}{\ep} K_p\left(\frac{t}{\ep}\right) + f(p,q)\\
\frac{\mathrm{d}q}{\mathrm{dt}}&=\frac{1}{\ep} K_q\left(\frac{t}{\ep}\right) + g(p,q)
\label{EqEx2d}
\end{array}\right.
\end{equation}
with the definitions:
\begin{align*}
K_p(t)&=\begin{pmatrix}-(\cos(t)+2)&\sin(t)+2\\\cos(t)+2&-(\sin(t)+2)\end{pmatrix}\\
K_q(t)&=\begin{pmatrix}-(\sin(t)+2)&\cos(t)+2\\\sin(t)+2&-(\cos(t)+2)\end{pmatrix}\\
f(p,q)&=\begin{pmatrix}a_p^1 p_1-b_p^1 p_1 q_1\\a_p^2 p_2-b_p^2 p_2 q_2 \end{pmatrix}\\
g(p,q)&= -\begin{pmatrix}a_q^1 q_1-b_q^1 p_1 q_1\\a_p^2 p_2-b_q^2 p_2 q_2 \end{pmatrix}
\end{align*}
For the numerical computation, we choose the following values:
\begin{equation*}
a_p=\begin{pmatrix}0.4\\0.3\end{pmatrix},~b_p=\begin{pmatrix}0.2\\0.1\end{pmatrix},~a_q=\begin{pmatrix}0.1\\0.2\end{pmatrix},~b_q=\begin{pmatrix}0.5\\0.3\end{pmatrix}
\end{equation*}
The direct approximation of the solution, using an implicit Euler method, with $dt=\ep^3$, gives us the Figure $\ref{SolExact}$, for $p_0=\begin{pmatrix} 0.1\\0.2\end{pmatrix}$ and $q_0=\begin{pmatrix} 0.3\\0.4\end{pmatrix}$.

\begin{figure}
\includegraphics[width=7cm]{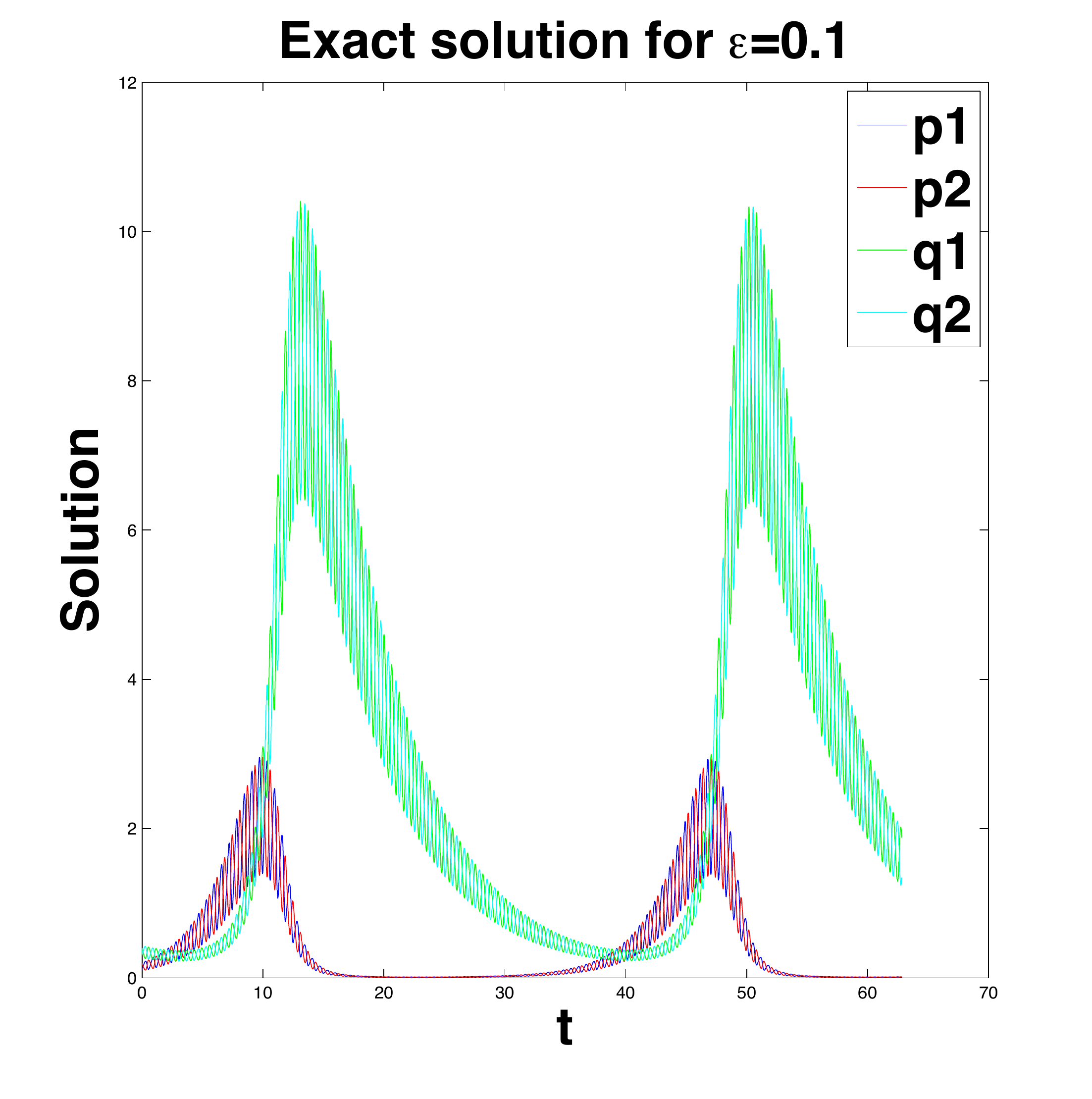}\hfill
\includegraphics[width=7cm]{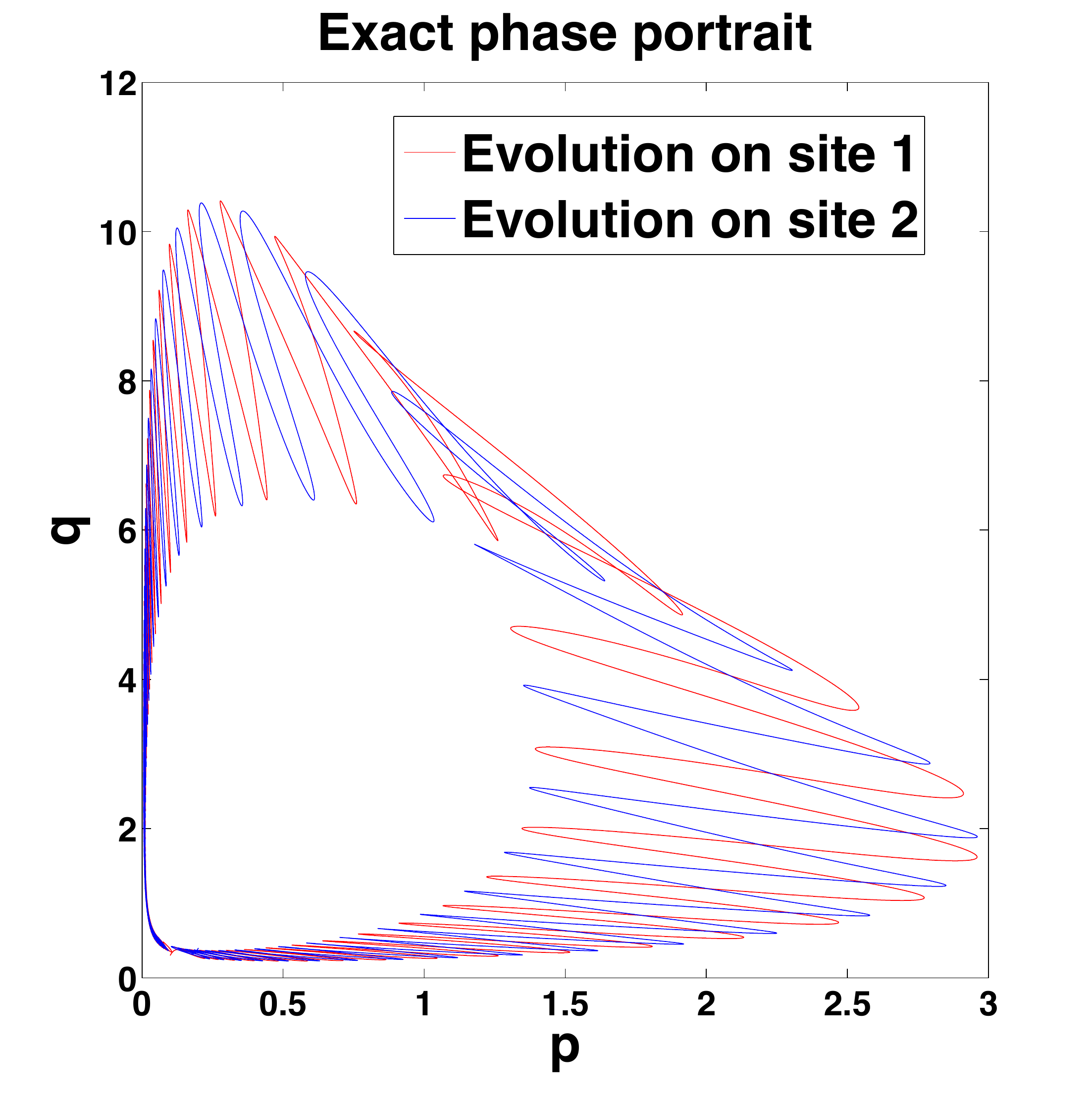}
\caption{Direct approximation of the solution $p_1$, $p_2$, $q_1$ and $q_2$ for $\ep=0,1$ and the corresponding phase portrait.}
\label{SolExact}
\end{figure}
\newpage
Now, we use our method to perform an approximation of this solution. We perform the change of variables, and to do so we compute $h_0$. We use the expressions:
\begin{align*}
h_p^0(x,\theta) &=- x \,\tilde R_p(\theta,0)(\mathrm{Id}-\tilde R_p(T,0))^{-1} \int_{\theta-T}^T \tilde R_p(0,\varphi) \dot{\tilde p}_{eq}(\varphi)d\mathrm{\varphi}\\
h_q^0(x,\theta) &=- x \,\tilde R_q(\theta,0)(\mathrm{Id}-\tilde R_q(T,0))^{-1} \int_{\theta-T}^T \tilde R_q(0,\varphi) \dot{\tilde q}_{eq}(\varphi)d\mathrm{\varphi}
\end{align*}
with $T=2\pi$. We approximate the integrals (using a Simpson method of order 3), and we get the Figure $2$ for $h_p^0$ and $h_q^0$, for $x_p=1$ and $x_q=1$.
\begin{figure}
\label{h0}
\includegraphics[width=7cm]{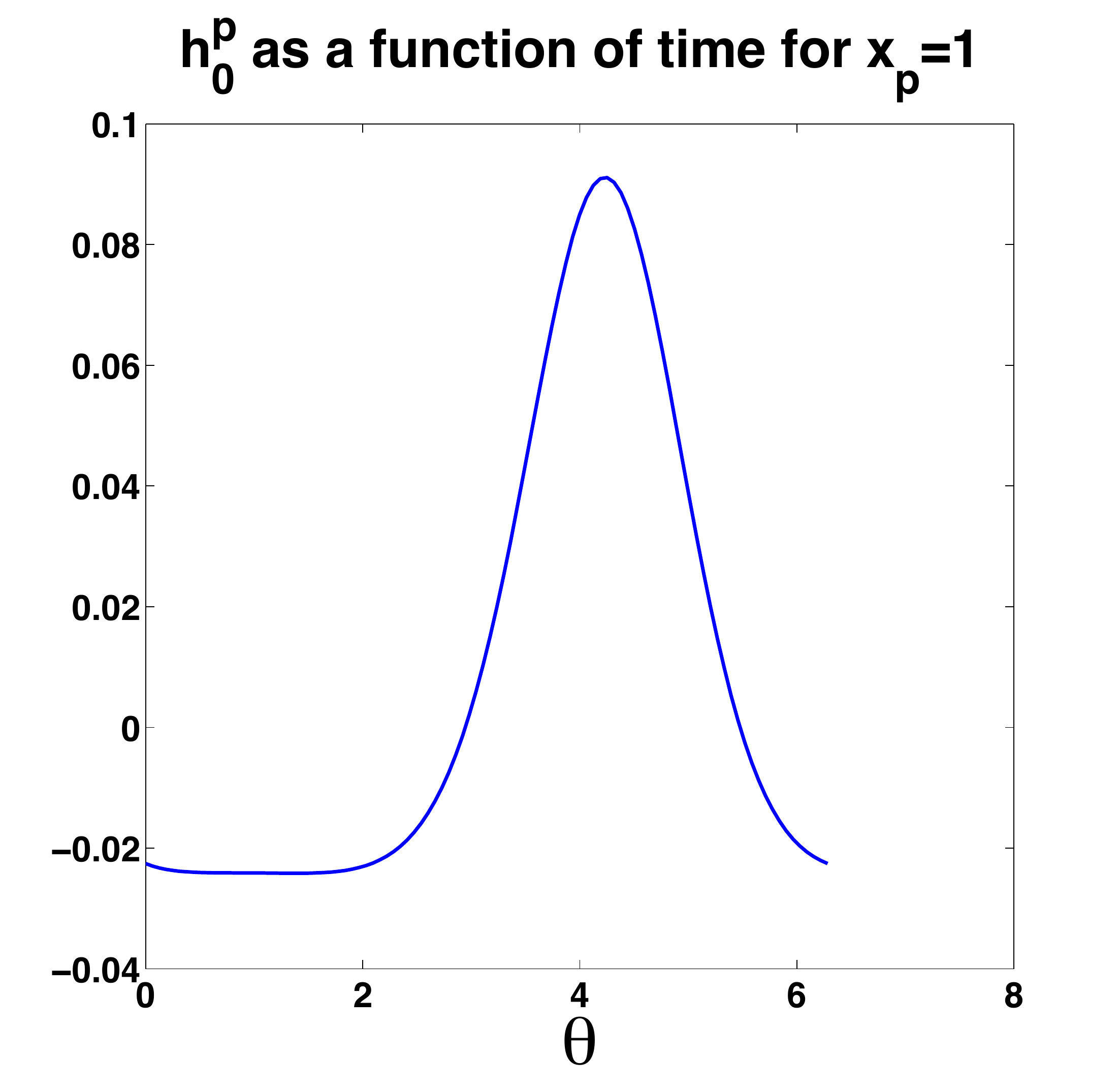}\hfill
\includegraphics[width=7cm]{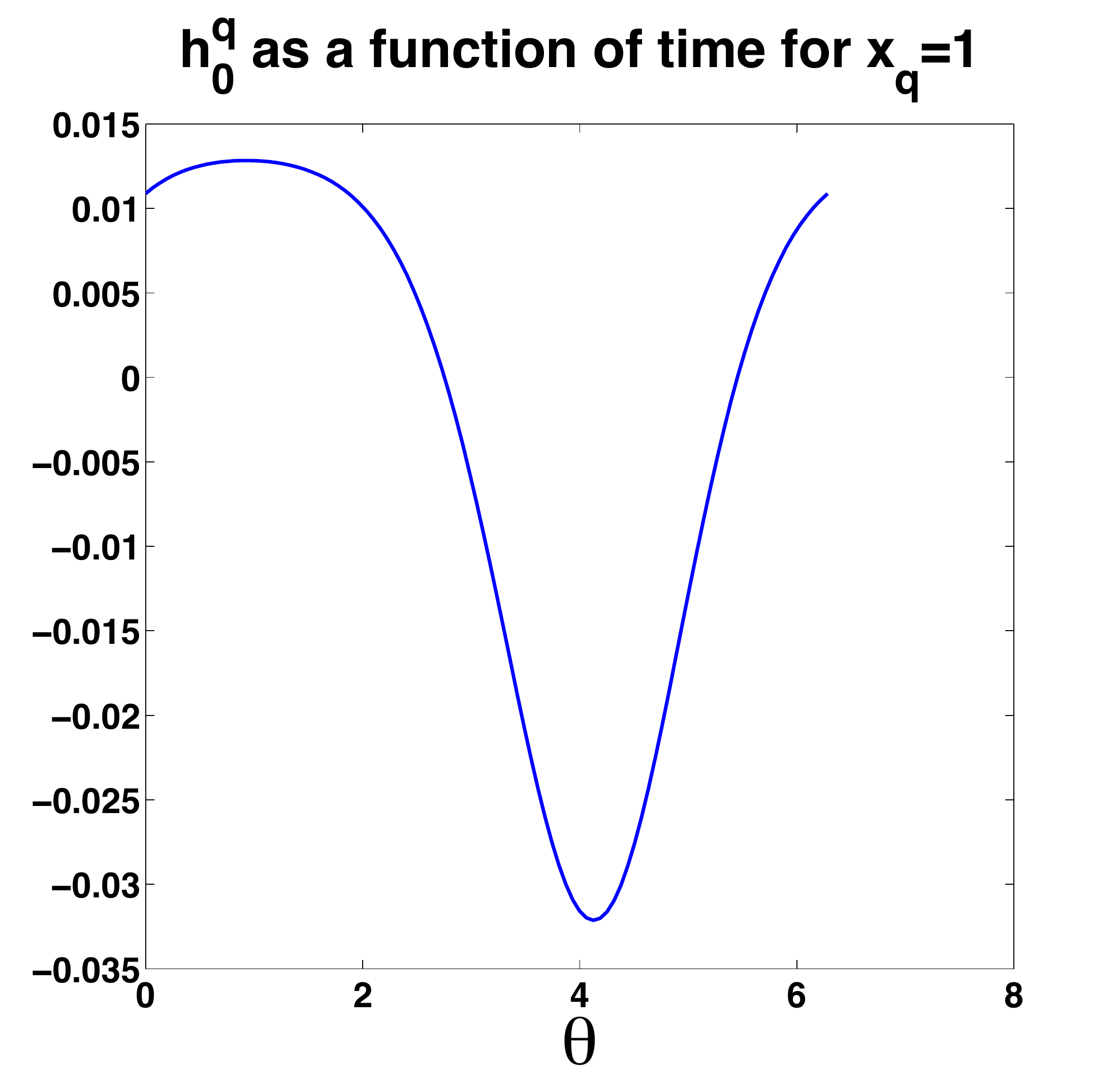}
\caption{First components of $h_p^0$ and $h_q^0$}
\end{figure}

Then, we approximate $h_1$, using the expression $(\ref{Ordre0})$:
\begin{equation*}
h^1(X,\theta) =R(\theta,0)(\mathrm{Id}-R(T,0))^{-1} \int_{\theta-T}^{\theta} R(0,\varphi) G(X,0,\varphi)  d\mathrm{\varphi}.
\end{equation*}
The Figure $3$ shows the evolution of $h_p^1$ and $h_q^1$, for $x_p=1$ and $x_q=1$.
\begin{figure}
\label{h1}
\includegraphics[width=7cm]{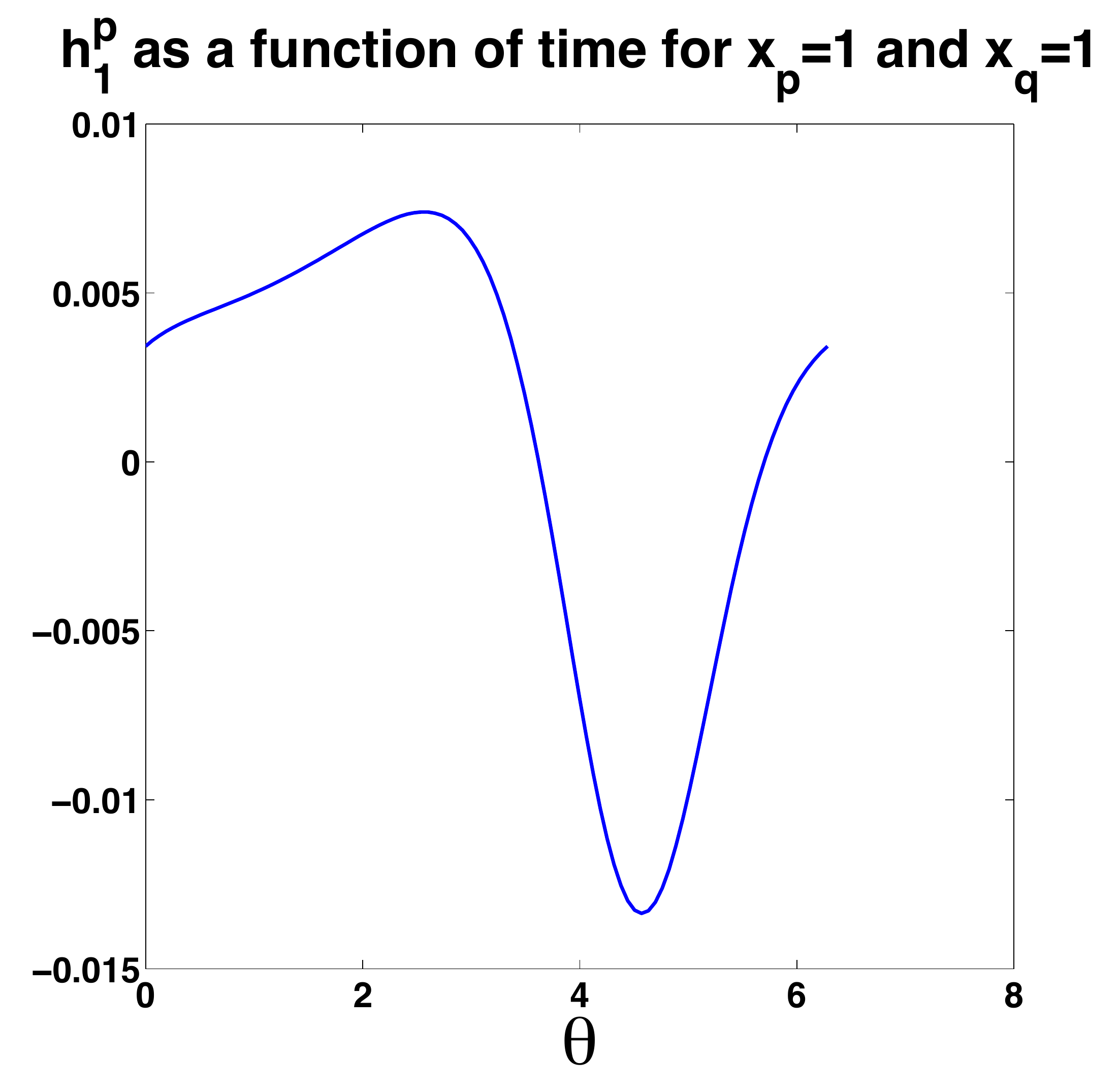}\hfill
\includegraphics[width=7cm]{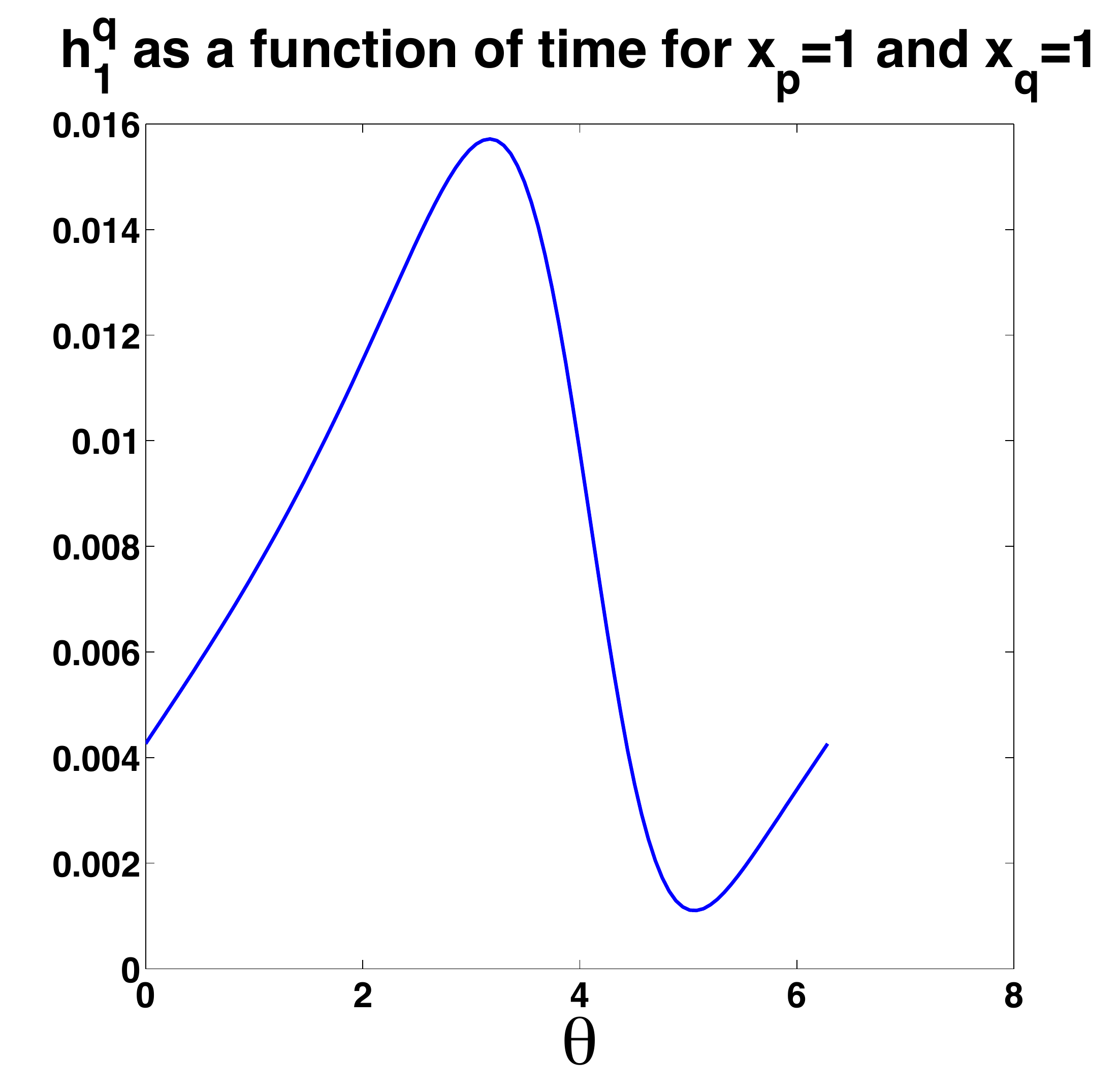}
\caption{First components of $h_p^1$ and $h_q^1$}
\end{figure}

\newpage
Then, we solve the equation in $X$, using a RK4 method. We do not use averaging here because we want to illustrate the impact of the approximation of the center manifold $h_\ep$ on the error. Performing the inverse change of variables, we get the following approximate solutions for $p_0=\begin{pmatrix} 0.1\\0.2\end{pmatrix}$ and $q_0=\begin{pmatrix} 0.3\\0.4\end{pmatrix}$, presented in Figure 4.

\begin{figure}[h]
\label{SolApprox}
\includegraphics[width=14cm]{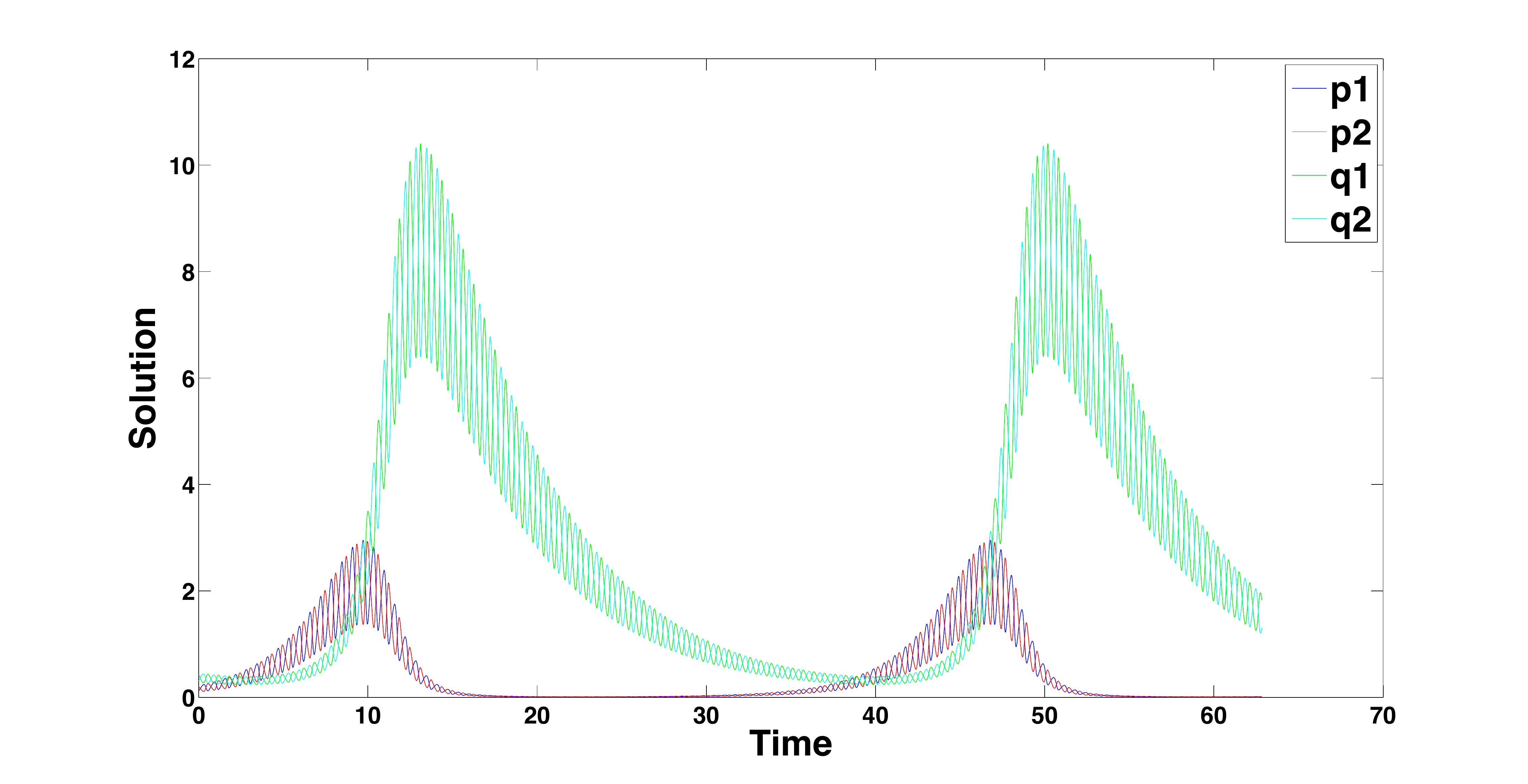}
\caption{Approximation of the solution for $\ep=0.1$ and $dt=0.01$}
\end{figure}
\newpage

We study the error. When we do not choose initial conditions $(p_0,q_0)$ corresponding to a $(x_0,z_0)$ on the central manifold, we observe an exponential decrease in $\frac{t}{\ep}$ of the error towards an $\mathcal{O}\left(\ep\right)$. Figure $\ref{fig01}$ shows the evolution of the error as a function of time for $\ep=0.1$, and Figure $\ref{fig001}$ for $\ep=0.01$.

\begin{figure}
\includegraphics[width=7cm]{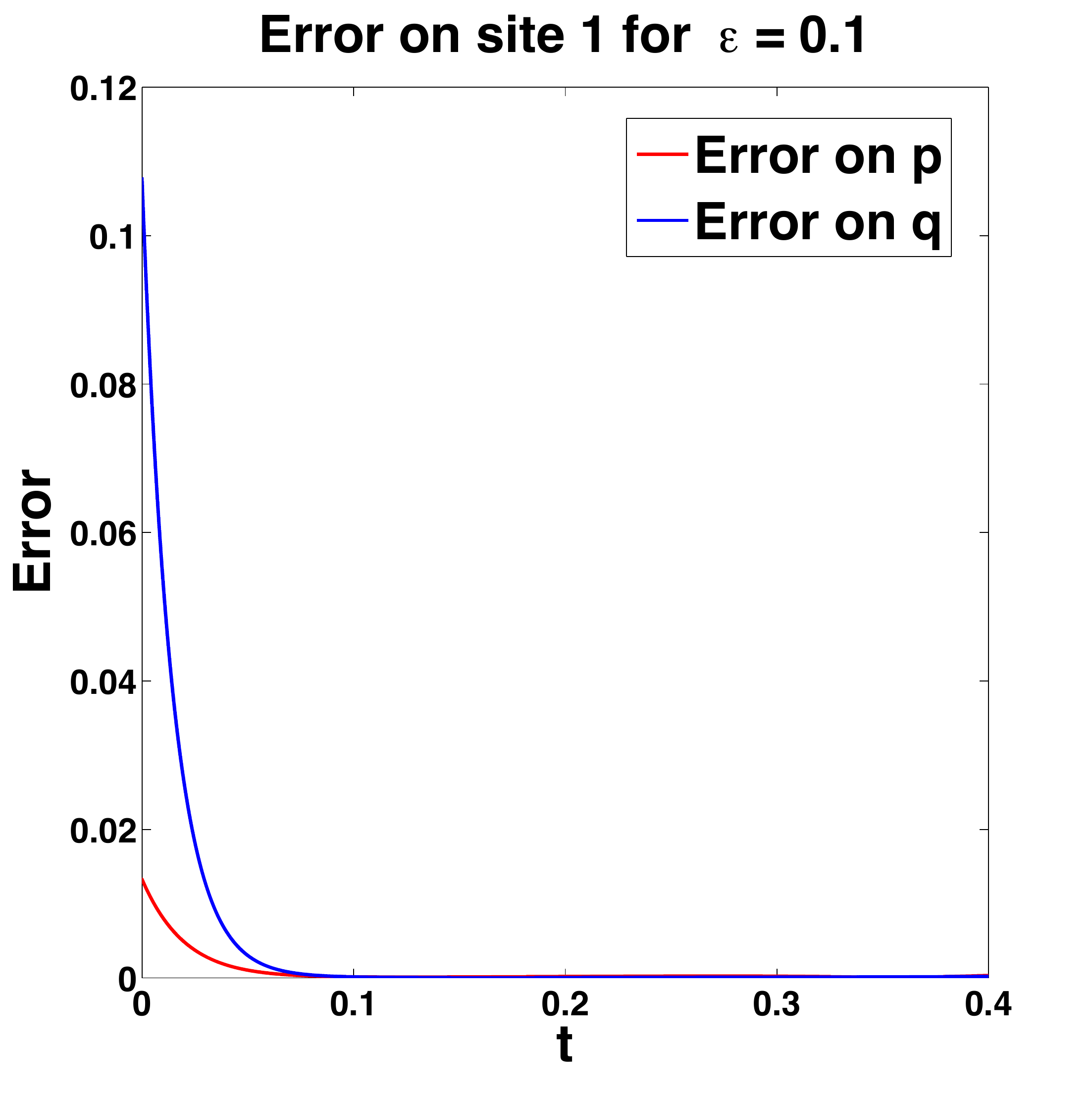}\hfill
\includegraphics[width=7cm]{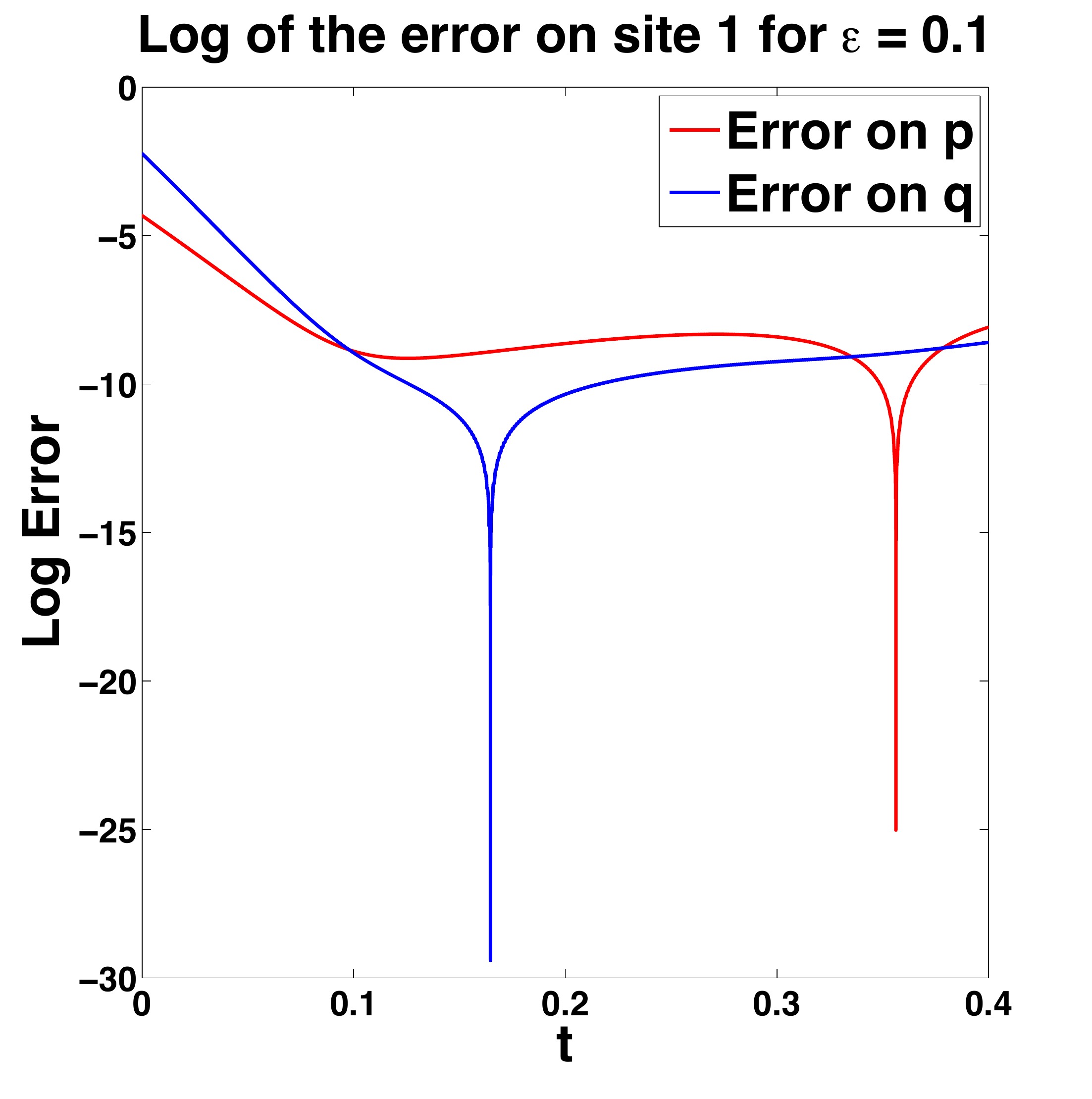}
\caption{Error between exact and approximate solution on site 1 for $\ep=0.1$, and log of the error.}
\label{fig01}
\end{figure}
\begin{figure}
\includegraphics[width=7cm]{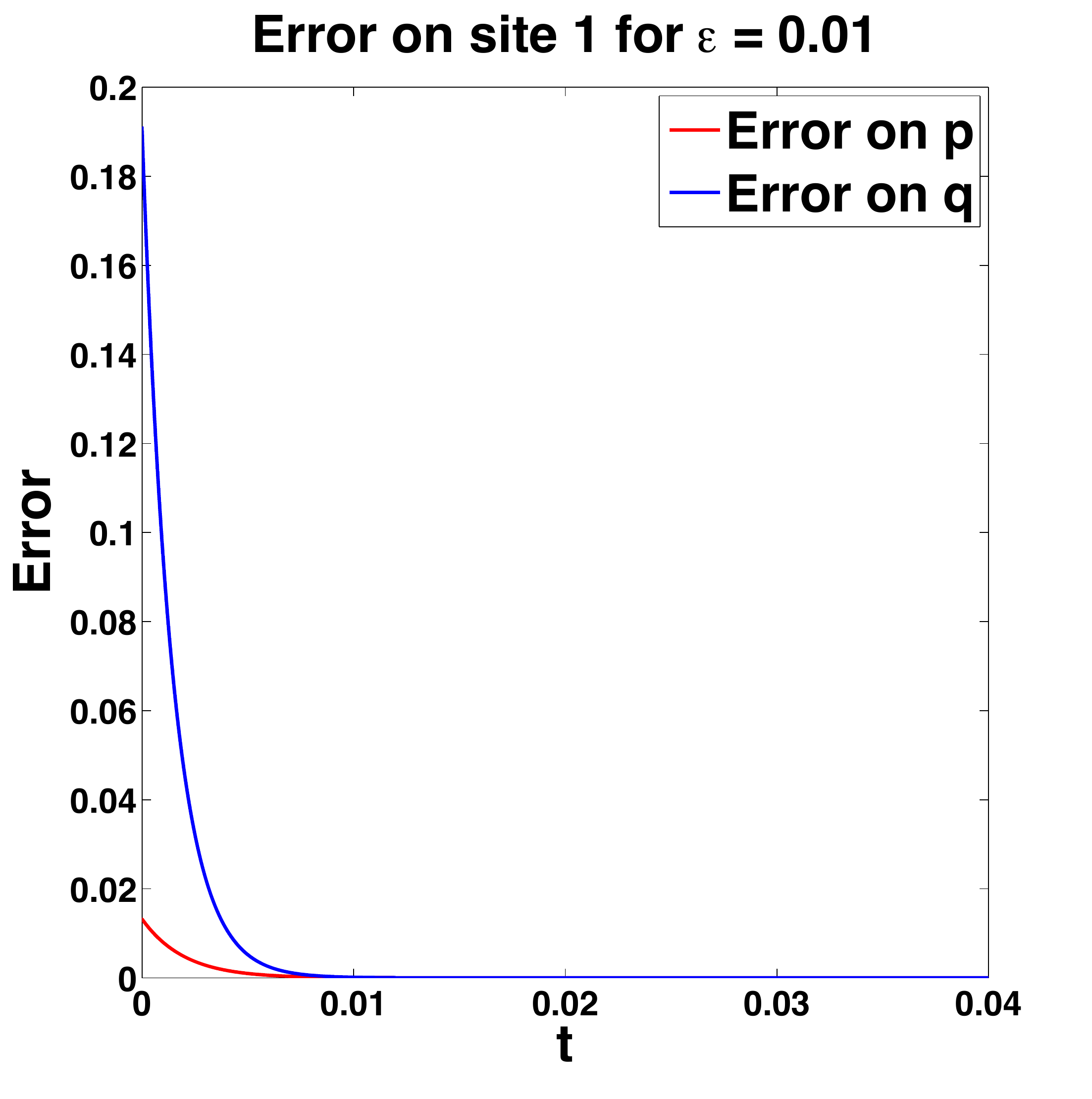}\hfill
\includegraphics[width=7cm]{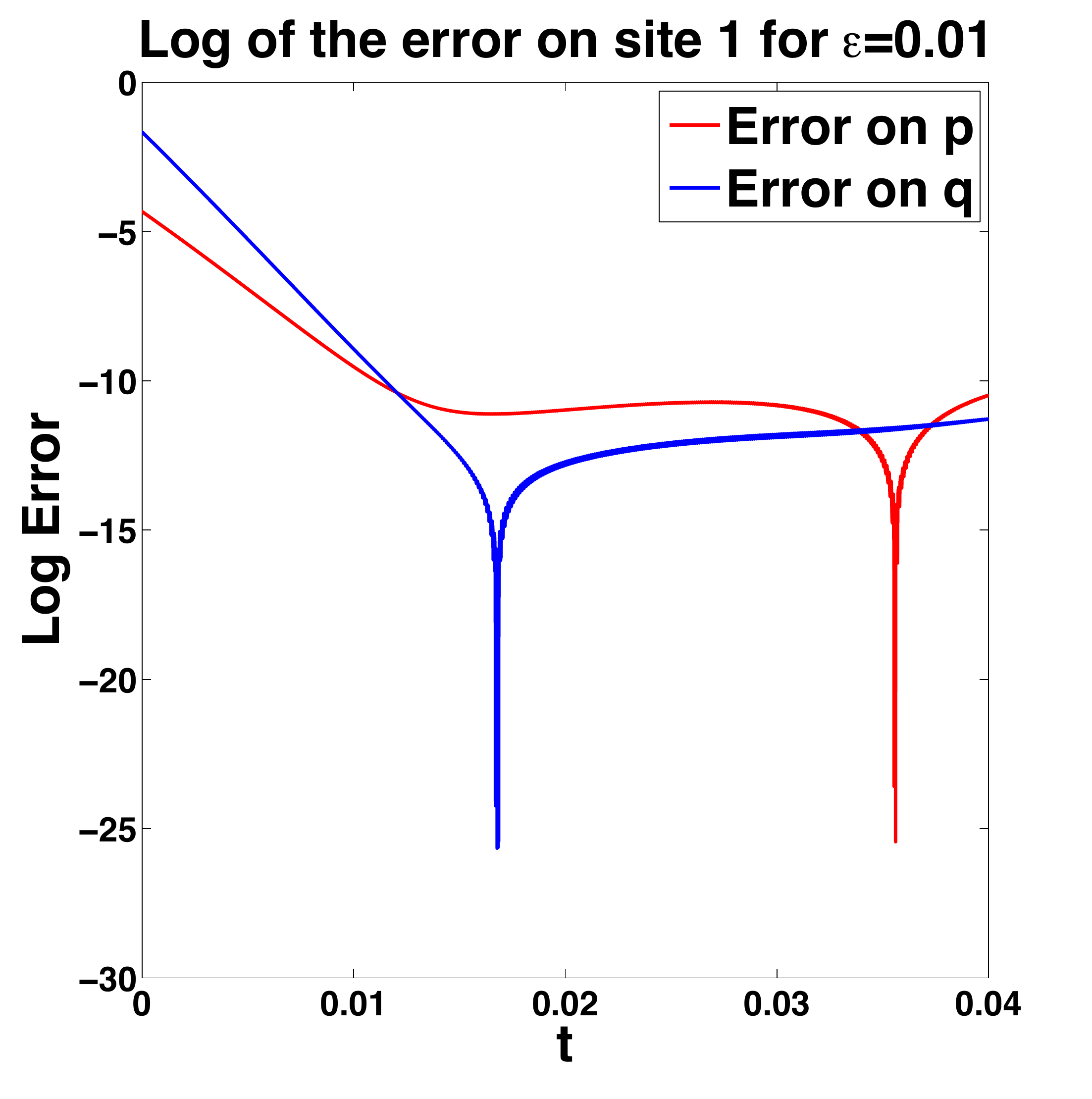}
\caption{Error between exact and approximate solution on site 1 for $\ep=0.01$, and log of the error.}
\label{fig001}
\end{figure}

To analyse the impact of the approximation of the central manifold, we change the inital conditions to begin on the central manifold. Then, we compute the maximum of the error between the real solution and the approximate one on the intervall $[0;10\ep]$ for differents value of $\ep$. Using a logaritheoremic scale, we obtain the Figure 7. Hence, we have a slope $1.0386$ for the error on $p$, and $1.0080$ for the error on $q$, and so we have shown that the first order approximation of the central manifold leads to an error in $\mathcal{O}\left(\ep\right)$.

\begin{figure}[h]
\label{LErrLep}
\includegraphics[width=12cm]{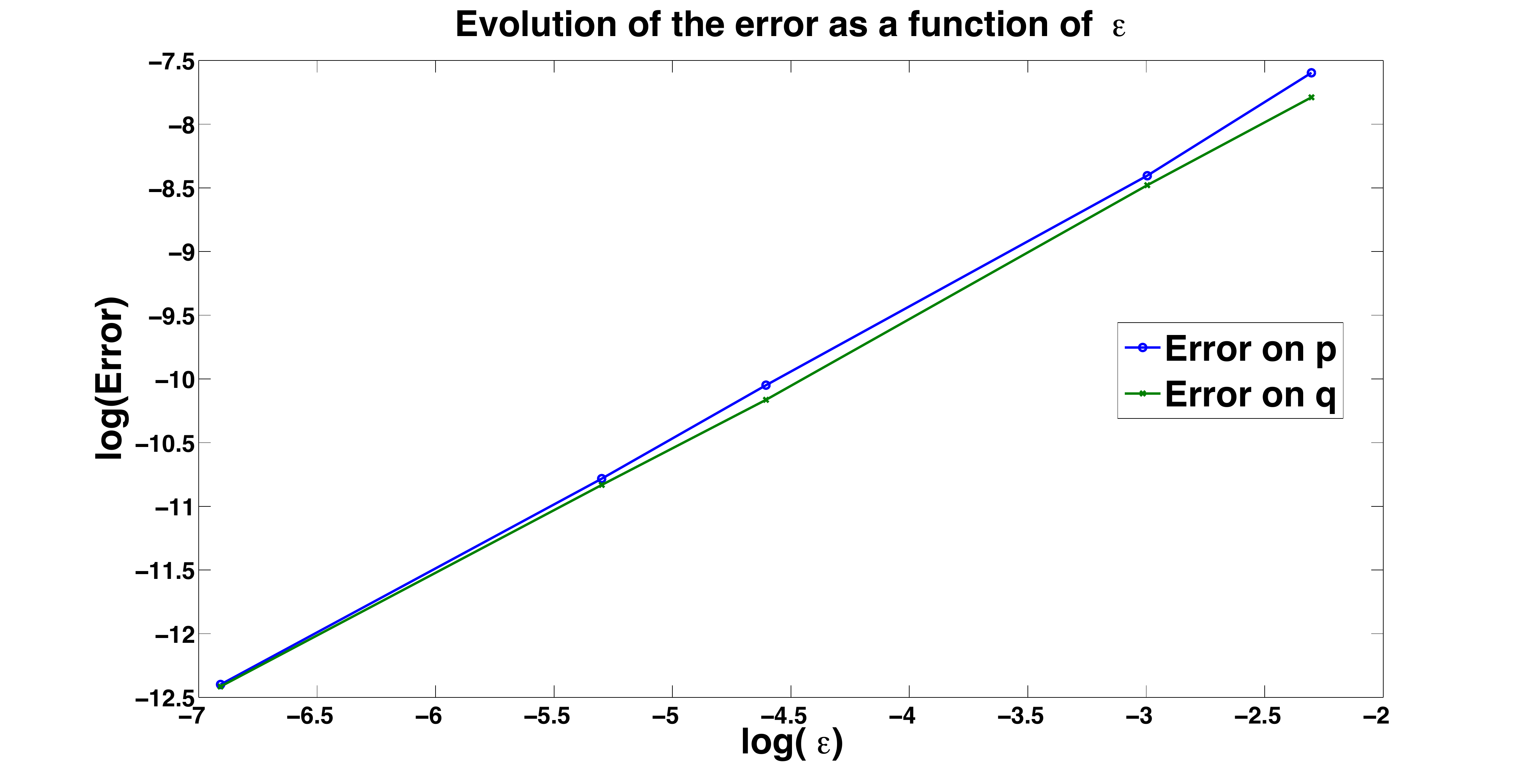} 
\caption{Evolution of the error as a function of $\ep$ (log-log scale)}
\end{figure}

The last part of the numerical resolution can be computed using averaging techniques, as presented in \cite{Chartier}.

 


\newpage
\addcontentsline{toc}{section}{Bibliography}
\nocite{*}

\bibliographystyle{alpha}

\bibliography{BibliArticle}

%
%

\end{document}